\documentclass[10pt]{amsart}
\usepackage{graphicx}
\usepackage{amssymb}
\usepackage{amsmath,amsthm}
\usepackage{hyperref}
\usepackage{amsthm}
\usepackage[english]{babel}
\usepackage{mathrsfs}


\newtheorem{thm}{Theorem}[section]
\newtheorem{cor}[thm]{Corollary}
\newtheorem{lem}[thm]{Lemma}
\newtheorem{prop}[thm]{Proposition}
\theoremstyle{definition}
\newtheorem{defn}[thm]{Definition}
\newtheorem{rem}[thm]{Remark}

\newtheorem{fact}[thm]{Fact}

\newtheorem{prob}[thm]{Problem}

\newtheorem{question}[thm]{Question}

\def \R{\mathcal R}
\def \kop{\mathcal{K}_{op}}
\def \sp{\operatorname{span}}

\makeatletter

\def\dotminussym#1#2{%
  \setbox0=\hbox{$\m@th#1-$}%
  \kern.5\wd0%
  \hbox to 0pt{\hss\hbox{$\m@th#1-$}\hss}%
  \raise.6\ht0\hbox to 0pt{\hss$\m@th#1.$\hss}%
  \kern.5\wd0}
\newcommand{\dotminus}{\mathbin{\mathpalette\dotminussym{}}}

\title[]{Games and elementary equivalence of $\rm II_1$ factors}
\author{Isaac Goldbring and Thomas Sinclair}
\thanks{Goldbring's work was partially supported by NSF grant DMS-1007144. Sinclair was supported by an NSF RTG Assistant Adjunct Professorship.}

\address {Department of Mathematics, Statistics, and Computer Science, University of Illinois at Chicago, Science and Engineering Offices M/C 249, 851 S. Morgan St., Chicago, IL, 60607-7045}
\email{isaac@math.uic.edu}
\urladdr{http://www.math.uic.edu/~isaac}

\address{Department of Mathematics, University of California Los Angeles, 520 Portola Plaza Box 951555, Los Angeles, CA 90095-1555}
\email{thomas.sinclair@math.ucla.edu}
\urladdr{http://www.math.ucla.edu/~thomas.sinclair}


\date{\today}
\subjclass{}

\keywords{}
\dedicatory{}
%


\newcommand{\e}{\varepsilon}

\newcommand{\bb}{\mathbb}

\newcommand{\cal}{\mathcal}
\newcommand{\fr}{\mathfrak}

\newcommand{\om}{\omega}
\newcommand{\uu}{\mathcal U}
\newcommand{\DL}{\operatorname{DL}}
\newcommand{\BP}{\operatorname{BP}}
\newcommand{\loc}{\operatorname{loc}}
\newcommand{\vN}{\operatorname{vN}}


\newcommand{\id}{\operatorname{id}}

\DeclareMathOperator*{\tr}{tr}

\DeclareMathOperator*{\Th}{Th}


\newcommand{\ip}[2]{\langle #1, #2 \rangle}
\providecommand{\abs}[1]{\lvert #1 \rvert}
\providecommand{\nor}[1]{\lVert #1 \rVert}

\begin{document}
\begin{abstract}
We use Ehrenfeucht-Fra\"iss\'e games to give a local geometric criterion for elementary equivalence of II$_1$ factors.  We obtain as a corollary that two II$_1$ factors are elementarily equivalent if and only their unitary groups are elementarily equivalent as $\mathbb Z_4$-metric spaces.
 \end{abstract}

\maketitle


\section*{Introduction}

While most mathematicians are concerned in determining when two objects in their field are isomorphic, logicians tend to be concerned with the coarser notion of \emph{elementary equivalence}.  Two (classical) structures $M$ and $N$ are said to be elementarily equivalent if and only if, for any first-order sentence $\sigma$ (in the language appropriate to the study of $M$ and $N$), we have $\sigma$ is true in $M$ if and only if $\sigma$ is true in $N$.  For structures appearing in analysis, a continuous logic is used in which sentences can now take a continuum of ``truth'' values; the appropriate notion of elementary equivalence is that the truth values of all sentences are the same in both structures.

The model-theoretic study of tracial von Neumann algebras began in earnest in \cite{mtoa1}, \cite{mtoa2}, and \cite{mtoa3}.  At the moment, there are only three distinct elementary equivalence classes of II$_1$ factors known.  (This should not be so surprising as it took a while for many isomorphism classes of II$_1$ factors to be discovered and elementary equivalence is a much coarser notion.)  Indeed, it was observed in \cite{mtoa3} that Property ($\Gamma$) and the property of being McDuff are both elementary properties (for separable II$_1$ factors).  Thus, if we let $M_{\DL}$ be a separable II$_1$ factor that has Property ($\Gamma$) but is not McDuff (see \cite{DL}), then $M_{\DL}$, the hyperfinite II$_1$ factor $\R$ and the free group factor $L(\mathbb F_2)$ are mutually non-elementarily equivalent.  Amongst those studying II$_1$ factors from a model-theoretic point of view, it is widely agreed that there should be more than three elementary equivalence classes of II$_1$ factors; in fact, there should probably be continuum many elementary equivalence classes.  At the moment, we cannot even answer the question:  is $\R\otimes L(\mathbb F_2)$ elementarily equivalent to $\R$?  In order to accomplish these goals, we need more tools for understanding elementary equivalence of II$_1$ factors.

Ehrenfeucht-Fra\"iss\'e games have long been a tool in model theory for establishing that structures are elementarily equivalent.  In \cite{heinrich-henson}, the authors exhibit an Ehrenfeucht-Fra\"iss\'e-type game used to establish elementary equivalence for Banach spaces.  In this note, we adapt the game from \cite{heinrich-henson} and combine it with an argument of Kirchberg appearing in \cite{kirch} in order to characterize elementary equivalence for II$_1$ factors belonging to the class $\kop$ (to be defined below).  We should note that, currently, we do not know of a II$_1$ factor that does not belong to the class $\kop$ and the existence of such a factor would already lead to two new theories of II$_1$ factors!

Recall Dye's Theorem \cite{dye}, which states that any two factors not of type $\rm I_{2^n}$ (e.g., any two $\rm II_1$-factors) are isomorphic if and only if their unitary groups are isomorphic (even as discrete groups).  Combining Dye's Theorem with the Keisler-Shelah Theorem (which states that two structures are elementarily equivalent if and only if they have isomorphic ultrapowers) and the fact that the functors of taking ultrapowers and taking unitary groups commute, we see that two II$_1$ factors are elementarily equivalent if and only if their unitary groups are elementarily equivalent as metric groups (with respect to the $\ell_2$ metric).  Using the aforementioned Ehrenfeucht-Fra\"iss\'e games and some further arguments, our main result is that we can improve upon the previous sentence, essentially removing the group structure:

\begin{thm}
Suppose that $M$ and $N$ are II$_1$ factors belonging to the class $\kop$.  Then $M$ and $N$ are elementarily equivalent if and only if $U(M)$ and $U(N)$ are elementarily equivalent as $\mathbb Z_4$-metric \emph{spaces}.
\end{thm}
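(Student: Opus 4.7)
The plan is to prove the two directions separately. The forward implication is a routine definability argument: the $\mathbb{Z}_4$-metric space $U(M)$ is interpretable (in the continuous-logic sense) inside the tracial von Neumann algebra $M$, with underlying set cut out by the unitarity condition, metric $\|\cdot\|_2$, and $\mathbb{Z}_4$-action given by left multiplication by $\{1, i, -1, -i\}$. Any continuous sentence about $U(M)$ in the reduct language therefore translates back to a continuous sentence about $M$, so $M \equiv N$ immediately yields $U(M) \equiv U(N)$ as $\mathbb{Z}_4$-metric spaces.

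For the substantive converse, my plan is to exploit the Ehrenfeucht-Fra\"{i}ss\'{e} characterization of elementary equivalence for II$_1$ factors in $\kop$ that the body of the paper develops (adapting Heinrich-Henson). Specifically, I would show that a winning strategy for Player II in the $\mathbb{Z}_4$-metric EF game on $U(M), U(N)$ induces one for the II$_1$-factor EF game on $M, N$. Two observations combine here. First, by the Russo-Dye theorem, every element of the operator-norm unit ball of a II$_1$ factor is a convex combination of unitaries, so (modulo bookkeeping on the number of rounds) the II$_1$-factor EF game can be restricted to unitary moves. Second, for unitaries the trace inner product is recoverable from $\mathbb{Z}_4$-metric data via
\[ \|u - v\|_2^2 = 2 - 2\operatorname{Re}\tr(v^*u), \qquad \|u - iv\|_2^2 = 2 + 2\operatorname{Im}\tr(v^*u), \]
so pairwise inner products of played unitaries are seen by both games.

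The principal obstacle, and where the real work lies, is that the matching criterion in the II$_1$-factor EF game depends on traces of arbitrary $*$-words $\tr(u_{i_1}^{\varepsilon_1}\cdots u_{i_k}^{\varepsilon_k})$, not merely on pairwise inner products. To bridge this gap I would adapt Kirchberg's argument from \cite{kirch}: whenever Player I challenges with a tuple, Player II must preemptively match not only the unitaries themselves but also the unitary words built from them, which is a legal move in the $\mathbb{Z}_4$-metric EF game since products of unitaries are unitaries. The role of the class $\kop$ is precisely to ensure that the approximations needed for this iterative word-matching procedure can be carried out uniformly from round to round, so that the induction over word length converges; without a $\kop$-type hypothesis the reconstruction of multiplication from $\mathbb{Z}_4$-metric data may fail to respect the approximation budget. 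Assembling these pieces, a Player II winning strategy for the $\mathbb{Z}_4$-metric game on $U(M), U(N)$ lifts to one for the II$_1$-factor game on $M, N$, yielding $M \equiv N$.
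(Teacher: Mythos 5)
Your forward direction is fine, but the converse has a genuine gap at exactly the step you flag as ``where the real work lies.'' The word-matching scheme cannot work as described: the winning condition of the $\mathbb Z_4$-metric EF game only controls pairwise distances $\|u_i-u_j\|_2$ and $\|u_i-iu_j\|_2$, i.e.\ pairwise inner products, and this data carries no information about products. If Player I (or Player II preemptively) plays a word $u_{i_1}\cdots u_{i_k}$ in the metric game, the response is just some unitary matching pairwise inner products with the previously played points; nothing forces it to be close to the corresponding word in the previously chosen responses, so the map $u_i\mapsto v_i$ never acquires approximate multiplicativity, and traces of length-$\geq 3$ words are never controlled. There is a structural obstruction showing this cannot be repaired round-by-round: $U(N)$ and $U(N^{op})$ are \emph{identical} as $\mathbb Z_4$-metric spaces, while quantities like $\tr(u_1u_2u_3)$ do distinguish $N$ from $N^{op}$ in general; hence no strategy-lifting argument from metric data alone can recover the atomic formulas of the tracial von Neumann algebra language. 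Relatedly, your account of the role of $\kop$ (``uniformity of the approximation budget so the induction over word length converges'') is not the right mechanism and would not rescue the induction.

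The paper's route avoids reconstructing multiplication locally. One works in a language with \emph{no} product (Banach pairs $(M,(M)_1)$ with the $2$-norm): the metric/unitary EF games, via the identities you quote and the fact that in a finite von Neumann algebra every contraction is an average of two unitaries, yield elementary equivalence of the Banach pairs, whose atomic formulas depend only on pairwise inner products. Keisler--Shelah then gives an ultrapower isomorphism of Banach pairs, i.e.\ an isometry $L^2(M^\uu)\to L^2(N^\uu)$ carrying $M^\uu$ onto $N^\uu$ contractively, and the multiplicative structure is recovered \emph{globally} by Kirchberg's argument: such a map sends unitaries to unitaries, hence is a normal Jordan isomorphism, hence (since the targets are factors) an isomorphism or an anti-isomorphism. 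This is where the ambiguity $M^\uu\cong N^\uu$ or $M^\uu\cong (N^{\op})^\uu$ arises, and the hypothesis $M,N\in\kop$ is used precisely and only to collapse ``$M\equiv N$ or $M\equiv N^{op}$'' to ``$M\equiv N$.'' To fix your write-up, replace the word-matching induction by this two-step scheme (pass to the product-free Banach-pair structure, then apply the Jordan-morphism rigidity at the ultrapower level), and relocate $\kop$ to the final step.
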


Here, by a $\mathbb Z_4$-metric space, we mean a metric space $X$ equipped with an action of $\mathbb Z_4$ on $X$ by isometries.  Unitary groups of von Neumann algebras will always be considered as $\mathbb Z_4$-metric spaces by having the generator of $\mathbb Z_4$ act by multiplication by $i$.

In this paper, we assume that the reader is familiar with some basic model theory and von Neumann algebra theory.  Good references for continuous model theory are \cite{bbhu} and \cite{mtoa2}; the latter is geared towards the model theoretic study of operator algebras.

All normed spaces are assumed to be over the complex numbers, $\bb C$. For a normed space $X$ we denote the closed unit ball $(X)_1 := \{x\in X : \nor{x}\leq 1\}$.

For the convenience of the reader, we now recall the original notion of Ehrenfeucht-Fra\"iss\'e games in the context of continuous logic.  This has not appeared in the literature but has appeared in some online lecture notes of Bradd Hart \cite{bradd}. Fix an arbitrary language $\cal L$ and atomic formulae $\varphi_1(\vec x),\ldots,\varphi_k(\vec x)$ in the variables $\vec x=(x_1,\ldots,x_n)$ and $\e>0$.  The Ehrenfeucht-Fraisse game $\fr G(\varphi_1,\ldots,\varphi_k,\e)$ is played with $\cal L$-structures $M$ and $N$ as follows:  First Player I chooses $a_1\in M$ or $b_1\in N$ respecting the sort of $x_1$.  Player II choose $b_2\in N$ or $a_2\in M$ respectively.  The players alternate in this manner until they have produced sequences $a_1,\ldots,a_n\in M$ and $b_1,\ldots,b_n\in N$.  Player II then wins the game if and only if, for each $i=1,\ldots,k$, we have $|\varphi_i(\vec a)^M-\varphi_i(\vec b)^N|\leq \e$.  It is then a theorem that $M\equiv N$ if and only Player II has a winning strategy in each $\fr G(\varphi_1,\ldots,\varphi_k,\e)$.

\section{The class $\kop$}

Given a C$^*$ algebra $A$, recall that its opposite algebra $A^{op}$ is the algebra obtained from $A$ by multiplying elements in the opposite order, that is, for $a,b\in A$, we have $a\cdot_{op}b:=b\cdot a$.  It is immediate that $A^{op}$ is once again a C$^*$ algebra.  Furthermore, if $A$ is a von Neumann algebra, then $A^{op}$ is also a von Neumann algebra.  Note also that if $(A_i \ : \ i\in I)$ is a family of C$^*$ algebras (resp. tracial von Neumann algebras) and $\uu$ is an ultrafilter on $I$, then $(\prod_\uu A_i)^{op}\cong \prod_\uu A_i^{op}$ via the identity map, where the ultraproduct is understood to be the usual C$^*$ algebra ultraproduct (resp. tracial ultraproduct).

Many of the naturally occurring tracial von Neumann algebras are isomorphic to their opposites, e.g. $\R$ and $L(G)$ ($G$ any group).  There are examples of tracial von Neumann algebras that are not isomorphic to their opposites (see \cite{Connes}).  During a seminar talk given by the first author at Vanderbilt University, Jesse Peterson asked whether or not the class of all tracial von Neumann algebras isomorphic to their opposites is an axiomatizable class.  While we do not know the answer to this question (although we suspect the answer is negative), the answer is positive if one replaces the word ``isomorphism'' by ``elementary equivalence'' as we show in the following:

\begin{prop}\label{kop}
The class of all tracial von Neumann algebras that are elementarily equivalent to their opposites is an elementary class.
\end{prop}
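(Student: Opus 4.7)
The plan is to verify the standard characterization: a class is elementary if and only if it is closed under elementary equivalence and under ultraproducts. Since the defining property ``$M \equiv M^{op}$'' is phrased in terms of elementary equivalence, closure under $\equiv$ will be immediate once we observe that elementary equivalence is preserved under taking opposites; closure under ultraproducts will follow from the fact noted in the excerpt that taking opposites commutes with ultraproducts.

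First I would establish the following general fact: if $M \equiv N$ (as tracial von Neumann algebras), then $M^{op} \equiv N^{op}$. The justification is that there is a natural bijection $\varphi \mapsto \varphi^{op}$ on formulas of the language of tracial von Neumann algebras, obtained by reversing the order of all multiplications, which satisfies $\varphi^{M^{op}} = (\varphi^{op})^M$ for every $M$ and every formula $\varphi$. Consequently, for any sentence $\sigma$, $\sigma^{M^{op}} = (\sigma^{op})^M = (\sigma^{op})^N = \sigma^{N^{op}}$. (Alternatively, one can invoke Keisler-Shelah together with the fact that the opposite functor commutes with ultrapowers.) This immediately yields closure of our class under elementary equivalence: if $N \equiv M$ and $M \equiv M^{op}$, then $N^{op} \equiv M^{op} \equiv M \equiv N$.

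Next I would verify closure under ultraproducts. Suppose $M_i \equiv M_i^{op}$ for each $i \in I$ and $\uu$ is an ultrafilter on $I$. By \L o\'s's theorem, for every sentence $\sigma$,
\[
\sigma^{\prod_\uu M_i} = \lim_\uu \sigma^{M_i} = \lim_\uu \sigma^{M_i^{op}} = \sigma^{\prod_\uu M_i^{op}},
\]
so $\prod_\uu M_i \equiv \prod_\uu M_i^{op}$. Combining with the isomorphism $\prod_\uu M_i^{op} \cong (\prod_\uu M_i)^{op}$ recorded in the paragraph preceding the proposition, we conclude $\prod_\uu M_i \equiv (\prod_\uu M_i)^{op}$, as desired.

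Finally, invoking the standard Keisler-style characterization of elementary classes in continuous logic (a class is axiomatizable if and only if it is closed under elementary equivalence and under ultraproducts) completes the proof. I do not anticipate any serious obstacle; the only point requiring a moment's thought is the claim that $\equiv$ is preserved under opposites, and even that is essentially a syntactic observation about reversing multiplications in formulas.
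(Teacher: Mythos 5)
Your proof is correct, but it takes the route the paper only mentions in passing rather than the one it writes out. The paper's official proof exhibits an explicit axiomatization directly: using the syntactic transform $t\mapsto t^{op}$, $\varphi\mapsto\varphi^{op}$ (the same device you use), it declares the conditions $|\sigma-\sigma^{op}|=0$, as $\sigma$ ranges over all sentences, to be axioms for $\kop$; the point is exactly your identity $\sigma^{M^{op}}=(\sigma^{op})^{M}$, which makes each such condition hold in $M$ iff $M\equiv M^{op}$. You instead verify closure of $\kop$ under elementary equivalence and under ultraproducts (via your lemma that $M\equiv N$ implies $M^{op}\equiv N^{op}$, plus \L o\'s and the fact that op commutes with ultraproducts) and then invoke the semantic characterization of axiomatizable classes in continuous logic; this is precisely the alternative the paper records in the remark following the proof (closure under isomorphism, ultraproducts, and ultraroots, per \cite[Proposition 5.14]{bbhu} -- note your version follows since closure under $\equiv$ together with $M\equiv M^{\uu}$ gives closure under ultraroots) and leaves as an exercise, so you have in effect carried out that exercise. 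The trade-off: the paper's argument is self-contained and produces concrete axioms, which matters for the questions it raises immediately afterward about more explicit or finite axiomatizations of $\kop$; your argument leans on the abstract characterization theorem but cleanly isolates the only real content, namely that $\equiv$ is preserved by taking opposites -- and, as you note, even that step can be outsourced to Keisler--Shelah plus the commutation of op with ultrapowers, avoiding the syntactic transform entirely, which the paper's explicit-axioms proof cannot do.
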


\begin{defn}
We let $\kop$ denote the class of all tracial von Neumann algebras elementarily equivalent to their opposites.
\end{defn}

\begin{proof}[Proof of Proposition \ref{kop}] We present a proof suggested to us by Todor Tsankov as well as independently by the anonymous referee. There are a collection of axioms for the class $\kop$:  for every term $t$, recursively define the term $t^{op}$ by   defining $(t_1\cdot t_2)^{op}:=t_2^{op}\cdot t_1^{op}$.  Then one can recursively define, for any formula $\varphi$, the formula $\varphi^{op}$, the key clause being the atomic formulae, where one replaces every occurrence of a term $t$ by the term $t^{op}$.  Then the conditions $|\sigma-\sigma^{op}|=0$, as $\sigma$ ranges over all sentences, axiomatizes the class $\kop$.
\end{proof}

We remark in passing that alternately by \cite[Proposition 5.14]{bbhu}, it suffices to show that $\kop$ is closed under isomorphisms, ultraproducts, and ultraroots. We leave it as an exercise to the reader to verify these properties for $\kop$.

Since $\R$ and $L(\mathbb F_2)$ are isomorphic to their opposites, they belong to $\kop$.  Moreover, the example $M_{\DL}$ of a II$_1$ factor with Property ($\Gamma$) that is not McDuff given by Lance and Dixmier  in \cite{DL} is also isomorphic to its opposite.  Thus, we have:

\begin{cor}
If there is a II$_1$ factor that does not belong to $\kop$, then there are at least five theories of II$_1$ factors.
\end{cor}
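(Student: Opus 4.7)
The plan is to leverage Proposition \ref{kop} together with the three known theories of II$_1$ factors discussed just before the corollary. Suppose $M$ is a II$_1$ factor with $M\notin\kop$, i.e., $M\not\equiv M^{op}$. I want to exhibit five pairwise inequivalent II$_1$ factors.

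First I would observe that $M^{op}$ is also a II$_1$ factor (the opposite algebra of a tracial factor is again a tracial factor, with the same trace, so it is of type II$_1$), and that $M^{op}\notin\kop$ as well: the opposite of $M^{op}$ is $M$, so $M^{op}\in\kop$ would mean $M^{op}\equiv M$, contradicting our assumption. Second, by Proposition \ref{kop} the class $\kop$ is elementary, hence closed under elementary equivalence. Since $\R$, $L(\mathbb F_2)$, and $M_{\DL}$ all lie in $\kop$ (they are each isomorphic to their opposites, as noted in the paragraph preceding the corollary), no factor outside $\kop$ can be elementarily equivalent to any of them. In particular $M$ and $M^{op}$ are each inequivalent to $\R$, $L(\mathbb F_2)$, and $M_{\DL}$.

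Putting this together, the five II$_1$ factors
\[
\R,\quad L(\mathbb F_2),\quad M_{\DL},\quad M,\quad M^{op}
\]
have pairwise distinct theories: the first three are mutually inequivalent by the results cited in the introduction, the last two are separated from the first three because they are outside $\kop$, and $M\not\equiv M^{op}$ by hypothesis.

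There is no real obstacle here; the entire content is packaged into Proposition \ref{kop}, which tells us that being (elementarily) self-opposite is preserved under elementary equivalence, so a counterexample $M$ automatically produces its mirror $M^{op}$ as a second new theory on top of the three already known.
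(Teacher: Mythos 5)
Your proposal is correct and follows essentially the same route as the paper, which simply asserts in one line that the theories of $N$ and $N^{op}$ differ from each other and from the three known theories; you have merely filled in the details (that $M^{op}\notin\kop$, and that no factor outside the elementary class $\kop$ can be elementarily equivalent to $\R$, $L(\mathbb F_2)$, or $M_{\DL}$, which lie in $\kop$). No issues.
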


\begin{proof}
If $N$ is a II$_1$ factor that does not belong to $\kop$, then the theories of $N$ and $N^{op}$ differ from each other and from the three known theories of II$_1$ factors. 
\end{proof}

\begin{question}
Are there more ``explicit'' axioms for the class $\kop$?  Can one use typical model-theoretic preservation theorems to show that $\kop$ is universally axiomatizable or $\forall\exists$-axiomatizable?
\end{question}

\begin{question}
Is there a single sentence $\sigma$ such that, adding the condition ``$\sigma=0$'' to the axioms for II$_1$ factors gives an axiomatization of $\kop$?
\end{question}

A negative answer to the last question implies that there must be infinitely many elementary equivalence classes of II$_1$ factors not belonging $\kop$.  Indeed, if there are only finitely many elementary equivalence classes of II$_1$ factors not belonging to $\kop$, then the class of II$_1$ factors not belonging to $\kop$ is readily verified to be elementary as well, whence a typical compactness argument is used to show that the last question has a positive answer.

\section{Model theory of Banach pairs} In order to frame the main results of the paper in the next section on the model theory of $\rm II_1$-factors, we introduce a class of linear (unbounded) metric structures (``Banach pairs'') for which $\rm II_1$-factors will be the primary set of examples. The important fact which we will see is that the theory of a $\rm II_1$-factor regarded as a Banach pair will determine its theory as $\rm II_1$-factor. For this reason we feel it is justified to introduce this treatment, despite several existing approaches in the literature for dealing with linear metric structures, e.g., \cite{ben, bbhu, henson-moore}, with at least one treatment \cite{mtoa2} being devoted to C$^*$-algebras and tracial von Neumann algebras.

\begin{defn} A \emph{Banach pair} $(X,\cal C)$ consist of a normed space $X$ and a distinguished subset $\cal C\subset (X)_1$ which is:
\begin{itemize}
\item complete;
\item roundly convex, i.e., $\lambda x  + \mu y\in \cal C$ for all $x,y\in \cal C$ and $\lambda,\mu\in \bb C$ with $\abs{\lambda} + \abs{\mu}\leq 1$; and
\item generating, i.e., $\bigcup_n n\cdot \cal C = X$.
\end{itemize}
\end{defn}

\noindent The main examples of Banach pairs we will be interested in are where $X = M$, a tracial von Neumann algebra equipped with the $2$-norm $\nor{x}_2 := \tr(x^*x)^{1/2}$, and $\cal C= (M)_1$, the (norm) closed unit ball.

A Banach pair $(X,\cal C)$ can be intepreted as an structure for the following language $\cal L_{\BP}$:

\begin{itemize}
\item There is one sort each for $\bb C$ and $X$.
\item There is a sequence of domains of quantification $\cal C_n$ for $X$.
\item There are function symbols $\imath_{m,n}: \cal C_m\to \cal C_n$ for $m\leq n$ to be interpreted as the usual inclusion maps.
\item $X$ is given the usual complex normed space axioms.
\item Axioms which show $0_X\in \cal C_1\subset (X)_1$.
\item Axioms to show each $\cal C_n$ is roundly convex.
\end{itemize}

For a Banach pair $(X,\cal C)$, for $x\in X$ we define $\nor{x}_{\cal C} := \inf\{t>0 : x\in t\cdot\cal C\}$ which can be checked to be a Banach norm on $X$. However, note that $\nor{\,\cdot\,}_{\cal C}$ is a definable predicate if and only if it is uniformly continuous with respect to the usual norm. (In the case that $X$ is a tracial von Neumann algebra this will be the case if and only if $X$ is finite-dimensional.)

As an $\cal L_{\BP}$-structure the ultrapower $(X,\cal C)^\uu$ can be identified with the Banach pair $(X^\uu, \cal C^\uu)$ where $X^\uu$ is the quotient space of $\{(x_i) : \lim_\uu \nor{x_i}_{\cal C}<\infty\}$ modulo the subspace $\{(z_i) : \lim_\uu\nor{z_i}_{\cal C}<\infty,\ \lim_\uu\nor{z_i}=0\}$ and $\cal C^\uu\subset X^\uu$ is defined in the obvious way.

We say that two Banach pairs $(X,\cal C)$ and $(Y,\cal D)$ are isomorphic (written $(X,\cal C)\cong (X,\cal D)$) if they are isomorphic as $\cal L_{\BP}$-structures, that is, if there is an isometry $T: X\to Y$ so that $T(\cal C) = \cal D$. By definition, the aforementioned Banach pairs are elementarily equivalent (written $(X,\cal C)\equiv (Y,\cal D))$ if $\Th(X,\cal C) = \Th(Y, \cal D)$. As a consequence of the Keisler--Shelah theorem in continuous logic, we have that $(X,\cal C)\equiv (Y,\cal D)$ if and only if there is an ultrafilter so that $(X,\cal C)^\uu \cong (Y,\cal D)^\uu$. See \S10 in \cite{henson-iovino} for a proof of this fact in the context of normed spaces or \S3 in \cite{heinrich-henson} for a more explicit construction for Banach spaces.

Our main observation in this section is that for Banach pairs $(X,\cal C)$ and $(Y,\cal D)$ elementary equivalence can be characterized in terms of the pairs ``having the same local geometric structure'' by the use of Ehrenfeucht--Fra\"iss\'e games. For the very similar case of Banach spaces, this was done by Heinrich and Henson \cite[Theorem 4]{heinrich-henson} and the case of normed spaces is largely similar (see Remark 10.10 in \cite{henson-iovino}).

We now describe precisely what we mean when we say that two Banach pairs $(X,\cal C)$ and $(Y,\cal D)$ have the same local geometric structure. For $E$ a subspace of $X$ and $F$ a subspace of $Y$ we say that a linear bijection $T: E\to F$ is an \emph{$\e$-almost isometry} if $\nor{T},\nor{T^{-1}}\leq 1+\e$ and $T(E\cap \cal C) \subset_\e F\cap \cal D$ and $T^{-1}(F\cap \cal D)\subset_\e E\cap \cal C$. (We write $A\subset_\e B$ if $\sup_{x\in A}\inf_{y\in B}\nor{x - y}\leq \e$.)

The following is adapted from \S2 of \cite{heinrich-henson}: see also \S8 of \cite{henson-moore}. We describe a game $\fr G(n,\e)$ played by two players with Banach pairs $(X,\cal C)$ and $(Y,\cal D)$, where $\e>0$ and $n$ are fixed parameters.

\noindent {\bf Step 1.} Player I chooses a one-dimensional subspace, either $E_1\subset X$ or $F_1\subset Y$. Player II then chooses a subspace, respectively $F_1\subset Y$ or $E_1\subset X$ and a linear bijection $T_1: E_1\to F_1$.

\noindent{\bf Step i.} Player I chooses an at most one-dimensional extension, either $E_i \supset E_{i-1}$ or $F_i\supset F_{i-1}$. Player II then chooses a subspace, respectively $F_i\subset Y$ or $E_i\subset X$ and a linear bijection $T_i: E_i\to F_i$ which extends $T_{i-1}$.

\noindent{\bf Step n.} The players make their choices, and the game terminates. Player II wins if $T_n: E_n\to F_n$ is an $\e$-almost isometry; otherwise, Player I wins.

During the course of proofs, we may speak of Player I playing $x_i\in X$, in which case we mean that Player I plays $\sp(E_{i-1}\cup \{x_i\})$.  We may then also say that Player II responds with $y_i\in Y$, in which case we mean that Player II plays the linear bijection $T_i$ extending $T_{i-1}$ that sends $x_i$ to $y_i$.

\begin{defn} We say that Banach pairs $(X,\cal C)$ and $(Y,\cal D)$ are \emph{locally equivalent} (written $(X,\cal C)\cong_{\loc} (Y,\cal D)$) if for every $\e>0$ and every $n$, Player II has a winning strategy for the game $\fr G(n,\e)$.
\end{defn}

\begin{rem}\label{alternate} Since $\e$ is arbitrary, and we need only deal with at most one-dimensional extensions, we see that local isomorphism remains the same under an alternate version of $\e$-almost isometry, namely, the existence of linear bijections $T: E\to F$, $S:F\to E$ with strict containment $T(E\cap \cal C)\subseteq F\cap \cal D$ and $S(F\cap \cal D)\subseteq E\cap \cal C$ so that $\nor{ST - \id_E}$, $\nor{TS - \id_F}<\e$ and $\|T\|,\|S\|<1+\e$.
\end{rem}

\begin{prop}\label{EFBP} The following statements are equivalent:
\begin{enumerate}
\item $(X,\cal C)\equiv (Y,\cal D)$;
\item there exists an ultrafilter so that $(X,\cal C)^\uu\cong (Y,\cal D)^\uu$ as Banach pairs;
\item $(X,\cal C)\cong_{\loc} (Y,\cal D)$.
\end{enumerate}
\end{prop}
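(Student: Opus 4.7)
The plan is to establish the three equivalences as follows: (1) $\Leftrightarrow$ (2) is an instance of the Keisler--Shelah theorem in continuous logic (as the paragraph immediately preceding the statement already invokes), so the substantive work splits into a back-and-forth argument realizing an ultrapower isomorphism from local equivalence, together with a translation between Hart-style EF games on formulas and the geometric EF game $\fr G(n,\e)$.

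For (3) $\Rightarrow$ (2), I would run a standard back-and-forth in countably saturated ultrapowers. Fix a countably incomplete ultrafilter $\uu$ so that $X^\uu$ and $Y^\uu$ are $\aleph_1$-saturated. Enumerate countable $\|\cdot\|_{\cal C^\uu}$-dense sequences $(\xi_k) \subset X^\uu$ and $(\eta_k) \subset Y^\uu$, and choose parameters $n_k \uparrow \infty$, $\e_k \downarrow 0$ in advance. At stage $k$, apply Player II's winning strategy in $\fr G(n_k,\e_k)$ to extend the current partial $\e_{k-1}$-almost isometry so as to absorb lifts of $\xi_k$ and $\eta_k$. Each finite-dimensional $\e_k$-almost isometry between $E_k \subset X$ and $F_k \subset Y$ transfers to the ultrapowers via representative sequences, and countable saturation assembles the chain of tightening partial maps into a genuine isometric linear isomorphism $X^\uu \to Y^\uu$ carrying $\cal C^\uu$ onto $\cal D^\uu$.

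For (1) $\Rightarrow$ (3), I would appeal to the Hart EF game recalled in the introduction. Given $n$ and $\e$, select a finite collection of atomic $\cal L_{\BP}$-formulas $\vp_1,\ldots,\vp_k$ in $n$ free variables that witness the $\e$-almost-isometry condition: the norms $\|\sum_{i=1}^n c_i x_i\|$ for $c$ ranging over a fine $\delta$-net of $(\bb C^n)_1$, together with distance-to-$\cal C_m$ predicates for the same linear combinations. If $(X,\cal C)\equiv(Y,\cal D)$, Player II wins each Hart game $\fr G(\vp_1,\ldots,\vp_k,\delta')$. Provided $\delta$ and $\delta'$ are chosen sufficiently small relative to $\e$, a Hart-game winning strategy converts directly to a winning strategy in $\fr G(n,\e)$: Player II's plays are identical, and agreement of atomic values on the net forces the induced linear bijection on the full span to satisfy the conditions of Remark \ref{alternate}.

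The main technical obstacle is the calibration in the last implication: choosing the atomic formulas and tolerances finely enough that Hart-game agreement forces an $\e$-almost isometry on the entire $n$-dimensional span rather than only on the chosen net. This mirrors the Banach-space argument of \cite[\S2]{heinrich-henson}, with the added wrinkle that the distinguished set $\cal C$ must be tracked via suitable distance predicates; the round convexity and generating axioms of a Banach pair ensure that the $\cal C$-structure on a finite-dimensional subspace is determined, up to any prescribed tolerance, by finitely many such predicates.
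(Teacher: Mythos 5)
Your overall architecture differs from the paper's, and one of your two substantive steps has a genuine gap. The paper closes the cycle as (1) $\Leftrightarrow$ (2) by Keisler--Shelah, (2) $\Rightarrow$ (3) as the easy direction (Player II answers with coordinates of representing sequences under the ultrapower isomorphism), and then proves (3) $\Rightarrow$ (1) directly: fix a prenex sentence $\inf_{v_1}\sup_{v_2}\cdots Q_{v_n}\rho$, let Player II's winning strategy in $\fr G(n,\delta)$ shadow the quantifiers, and show by induction on the number of quantifiers removed that corresponding tuples give values within $\e$, the base case being that the final $\delta$-almost isometry approximately preserves quantifier-free formulas. No saturation or ultrafilter combinatorics is needed there. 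Your replacement of (3) $\Rightarrow$ (1) by a direct proof of (3) $\Rightarrow$ (2) is where the gap lies: you propose to enumerate countable $\nor{\,\cdot\,}_{\cal C^\uu}$-dense sequences in $X^\uu$ and $Y^\uu$ and run a countable back-and-forth using $\aleph_1$-saturation. But for infinite-dimensional $X$ and a countably incomplete ultrafilter, $X^\uu$ is never separable, so such dense sequences do not exist; a countable chain of tightening finite-dimensional almost isometries assembles (via saturation) only into an isometry between separable closed substructures, not between the full ultrapowers. Passing from elementary (or local) equivalence to an isomorphism of actual ultrapowers is exactly the content of Keisler--Shelah and needs its machinery (special/good ultrafilters or an iterated ultrapower construction); $\aleph_1$-saturation of a single ultrapower cannot deliver it. The repair that stays closest to your plan is to prove (3) $\Rightarrow$ (1) instead, as the paper does, and let Keisler--Shelah produce (2).

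Your (1) $\Rightarrow$ (3) via Hart games is a legitimate alternative route (the paper gets this implication for free from (1) $\Rightarrow$ (2) $\Rightarrow$ (3)), but as sketched it needs two refinements. First, the EF theorem you quote is for \emph{atomic} formulas, and in $\cal L_{\BP}$ the atomic formulas are norms of terms; ``distance to $\cal C_m$'' is a quantified predicate, so you must either add rounds witnessing those infima or invoke the version of the game theorem for arbitrary formulas. Relatedly, matching norm values and landing in $\cal D$ only gives closeness of $T(E\cap\cal C)$ to $\cal D$, whereas the winning condition asks for closeness to $F\cap\cal D$; this is where the flexibility of Remark \ref{alternate} (and the fact that $\e$ is arbitrary and dimensions finite) has to be used explicitly. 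Second, agreement of $\nor{\sum_i c_i x_i}$ on a fixed finite net of coefficients forces norm control on the whole span only if the spanning vectors are uniformly well-conditioned, so Player I's subspace moves should first be normalized (e.g., by an Auerbach-type basis with constants depending only on $n$) before the tolerances $\delta,\delta'$ are calibrated.
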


\noindent As noted above (1) $\Leftrightarrow$ (2) is the Keisler--Shelah theorem applied to the language of Banach pairs.  The proof of (2) $\Rightarrow$ (3) is straightforward using representing sequences. Therefore we only need to prove (3) $\Rightarrow$ (1).  The proof is more or less identical to the Banach space version as in \cite{heinrich-henson}.  However, since we are working in a different logic, we sketch a (nearly complete) proof here for the convenience of the reader.

\begin{proof}[Sketch of (3) $\Rightarrow$ (1)]
First, we work with the notion of $\epsilon$-almost isometry as described in Remark \ref{alternate}.  Let $\sigma$ be a sentence of the form $\inf_{v_1}\sup_{v_2}\cdots Q_{v_n} \rho(v_1,\ldots,v_n)$, where $Q$ is $\inf$ if $n$ is odd and $\sup$ if $n$ is even and where $\rho$ is quantifier-free.  (We suppress mention of the sorts $\cal C_i$ corresponding to each $v_i$.)  Fix $\epsilon>0$.  It suffices to show that $\sigma^{(Y,\cal D)}\leq \sigma^{(X,\cal C)}+\epsilon$ for all $\epsilon>0$.  Indeed, by symmetry of the relation of local equivalence, this shows that all sentences of the above form have the same truth values in $(X,\cal C)$ and $(Y,\cal D)$.  Since any sentence in prenex normal form is equivalent to one of the above form (by adding dummy variables) and since the set of  sentences in prenex normal form is dense in the set of all sentences (see \cite[\S6]{bbhu}), we obtain that $(X,\cal C)\equiv (Y,\cal D)$.

Fix sufficiently small $\delta>0$.  (We will see exactly how small $\delta$ needs to be in a moment.)  Fix a winning strategy $\cal S$ for Player II in $\fr G(n,\delta)$.  Call a play of the game $\fr G(n,\delta)$ \emph{regular} if:
\begin{itemize}
\item For odd $i$, Player I plays $x_i\in X$, while for even $i$, Player I plays $y_i\in Y$;
\item For each $i$, Player I's move at Round $i$ is always in the sort corresponding to the variable $v_i$;
\item Player II always plays according to $\cal S$.
\end{itemize}
We say that sequences $x_1,\ldots,x_k\in X$ and $y_1,\ldots,y_k\in Y$ \emph{correspond} if they are the results of the first $k$ rounds of a regular play of $\fr G(n,\delta)$.

For $0\leq l \leq n$, let $\sigma_l(v_1,\ldots,v_{n-l})$ denote the formula obtained from $\sigma$ by removing the first $n-l$ quantifiers.  One now proves, by induction on $l$ ($0\leq l\leq n$), that if $x_1,\ldots,x_{n-l}\in X$ and $y_1,\ldots,y_{n-l}\in Y$ correspond, then
$$\sigma_l(y_1,\ldots,y_{n-1})^{(Y,\cal D)}\leq \sigma_l(x_1,\ldots,x_{n-1})^{(X,\cal C)}+\e.$$  The base case $l=0$ follows from the fact that $T_n:\sp(x_1,\ldots,x_n)\to \sp(y_1,\ldots,y_n)$ is a $\delta$-almost isometry if $\delta$ is chosen sufficiently small.  We now prove the induction step.  Suppose that the claim holds for $l$ and that $x_1,\ldots,x_{n-l-1}\in X$ and $y_1,\ldots,y_{n-l-1}\in Y$ correspond.  Let $r:=\sigma_{l+1}(x_1,\ldots,x_{n-l-1})^{(X,\cal C)}$.  First suppose that $n-l$ is odd, so that $\sigma_{l+1}(v_1,\ldots,v_{n-l-1})=\inf_{v_{n-l}}\sigma_l(v_1,\ldots,v_{n-l})$.  Fix $\eta>0$ and let $x_{n_l}\in X$ be of the same sort as $v_{n-l}$ so that $\sigma_l(x_1,\ldots,x_{n-l})^{(X,\cal C)}\leq r+\eta$.  Let $y_{n-l}\in Y$ be Player II's response to $x_{n-l}$ according to the strategy $\cal S$.  Then, by induction,
$$\sigma_l(y_1,\ldots,y_{n-l})^{(Y,\cal D)}\leq \sigma_l(x_1,\ldots,x_{n-l})^{(X,\cal C)}+\e\leq r+\e+\eta.$$  Letting $\eta$ go to $0$ yields the desired result.  The case that $n-l$ is even is similar and is left to the reader.
\end{proof}


\section{Elementary equivalence of $\rm II_1$-factors}

We say that two tracial von Neumann algebras $M$ and $N$ are locally equivalent if the associated Banach pairs $(M,(M)_1)$ and $(N, (N)_1)$ are locally equivalent.  Somewhat miraculously, it turns out that for II$_1$ factors belonging to $\kop$, local equivalence is the same as elementary equivalence.  This essentially follows from an argument of Kirchberg in \cite{kirch}.  First, we need to recall a fact about \emph{Jordan morphisms} between von Neumann algebras.

Given a C$^*$ algebra $A$, the \emph{special Jordan product} on $A$ is the operation $\circ$ defined by $a\circ b:=\frac{1}{2}(ab+ba)$ for all $a,b\in A$.  If $B$ is also a C$^*$ algebra, then a linear map $T:A\to B$ is a \emph{Jordan morphism} if it preserves the special Jordan product and the involution.  We need the following:

\begin{fact}(See \cite[Corollary 7.4.9]{jordan})
If $M$ and $N$ are von Neumann algebras and $T:M\to N$ is a normal Jordan homomorphism, then $T$ is the sum of a $*$-homomorphism and a $*$-antihomomorphism.
\end{fact}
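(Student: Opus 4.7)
The plan is to establish this via the classical triple-product identity for Jordan homomorphisms, combined with a central-projection decomposition inside the von Neumann algebra generated by $T(M)$.

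First I would reduce to the unital case. Since $T$ preserves squares of self-adjoints (as $a\circ a = a^2$, so $T(a^2) = T(a)^2$), one sees that $T(1)$ is a projection in $N$ and the image of $T$ is contained in $T(1)NT(1)$; replacing $N$ by this corner we may assume $T$ is unital. Normality of $T$ ensures that $T(M)''$ is a von Neumann subalgebra of $N$, and the decomposition we seek will come from a central projection inside $T(M)''$.

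The key algebraic input is the triple-product identity $T(aba) = T(a)T(b)T(a)$ for all $a,b\in M$, a standard consequence of the Jordan property: expanding $T((a\circ a)\circ b)$ and $T(a\circ(a\circ b))$ using $T(x\circ y)=T(x)\circ T(y)$ and comparing yields the identity (polarizing in $a$ if needed). Polarizing further in $a$ gives
\[
T(abc + cba) = T(a)T(b)T(c) + T(c)T(b)T(a).
\]
I now introduce the two ``obstruction'' bilinear forms $\Phi(a,b):=T(ab)-T(a)T(b)$ and $\Psi(a,b):=T(ab)-T(b)T(a)$. The Jordan condition $T(a\circ b)=T(a)\circ T(b)$ forces $\Phi(a,b)+\Psi(a,b)=0$, so $\Phi$ measures the failure of $T$ to be a $*$-homomorphism and $\Psi$ the failure of $T$ to be a $*$-antihomomorphism, and the two obstructions are negatives of each other. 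The goal is to produce a central projection $z\in Z(T(M)'')$ such that $(1-z)\Phi(a,b)=0$ and $z\Psi(a,b)=0$ for all $a,b\in M$; then $(1-z)T$ is a $*$-homomorphism and $zT$ is a $*$-antihomomorphism, giving the claimed decomposition $T = zT + (1-z)T$.

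The main obstacle is producing this central projection $z$ and verifying that it cleanly separates the two pieces. Concretely, one would show using the polarized triple-product identity that products of the form $\Phi(a,b)\,T(x)\,\Psi(c,d)$ and $\Psi(c,d)\,T(x)\,\Phi(a,b)$ vanish, so the two-sided ideals in $T(M)''$ generated respectively by $\{\Phi(a,b)\}$ and $\{\Psi(a,b)\}$ are orthogonal. By normality, their weak-operator closures are orthogonal weakly-closed two-sided ideals in the von Neumann algebra $T(M)''$, hence their central support projections are complementary; taking $z$ to be the central support of the $\Psi$-ideal gives the desired projection. Verifying the algebraic orthogonality of $\Phi$ and $\Psi$ from the polarized triple-product identity is the combinatorial heart of the argument, and is where the structure theory of JW-algebras (as developed in the reference \cite{jordan}) does the real work.
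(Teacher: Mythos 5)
There is a genuine gap, and also one outright false identity. First, the false identity: from $T(ab+ba)=T(a)T(b)+T(b)T(a)$ one gets $\Phi(a,b)+\Psi(a,b)=2T(ab)-T(a)T(b)-T(b)T(a)=T(ab-ba)=T([a,b])$, not $0$; the two obstructions are \emph{not} negatives of each other (if they were, $T$ would kill all commutators). This slip is not load-bearing, but it signals that the algebra of these obstruction maps is being handled too casually. The load-bearing problem is the step you yourself call the ``combinatorial heart'': the claim that $\Phi(a,b)\,T(x)\,\Psi(c,d)=0$ for \emph{all} pairs $(a,b)$, $(c,d)$ and all $x$. What the triple-product identity $T(abc+cba)=T(a)T(b)T(c)+T(c)T(b)T(a)$ actually yields by direct manipulation is much weaker: for the \emph{same} pair one gets $\Phi(a,b)\Psi(a,b)=\Psi(a,b)\Phi(a,b)=0$ (expand $\bigl(T(ab)-T(a)T(b)\bigr)\bigl(T(ab)-T(b)T(a)\bigr)$ using $T(ab)^2=T(abab)$, $T(a)T(b^2)T(a)=T(ab^2a)$ and the polarized identity), and the symmetrized relation $\Phi(a,b)T(x)\Psi(a,b)+\Psi(a,b)T(x)\Phi(a,b)=0$. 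Passing from these classical Jacobson--Rickart-type relations to the separated, mixed-pair vanishing --- and then to genuine orthogonality of the two weakly closed ideals, which requires killing $\Phi(a,b)\,w\,\Psi(c,d)$ for arbitrary $w\in T(M)''$, i.e.\ for arbitrary words $T(x_1)\cdots T(x_k)$, not just single elements $T(x)$ --- is precisely the substantive content of the Kadison--St\o rmer/Hanche-Olsen--St{\o}rmer theorem. Your sketch asserts it and then defers it to ``the structure theory of JW-algebras,'' so the proof of the Fact is not actually given; note that the paper itself offers no proof either, but simply cites \cite[Corollary 7.4.9]{jordan}, where that structure theory (the enveloping von Neumann algebra of the image JW-algebra and its canonical antiautomorphism) is what does the work.

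Two smaller points. The separated vanishing, once established, gives orthogonal central supports, which is all you need: you should take $z$ to be the central support of the $\Phi$-ideal (with your stated goals $(1-z)\Phi=0$ and $z\Psi=0$), not of the $\Psi$-ideal, and there is no reason for the two supports to be \emph{complementary} --- on the part of $T(M)''$ where both obstructions vanish, $T$ is simultaneously a homomorphism and an antihomomorphism, and either labeling works. Finally, a cautionary check that any correct argument must use von Neumann algebra structure and not only algebraic identities: for $C([0,1],M_2)$ the Jordan automorphism that is the identity on $[0,1/2]$ and the transpose on $[1/2,1]$ is not a sum of a $*$-homomorphism and a $*$-antihomomorphism, because the needed projection only exists in the weak closure; this is exactly why your reduction to central supports inside $T(M)''$ (and normality) is where the real content lies, and why that step cannot be waved through.
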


Recall that a map $A\to B$ between C$^*$ algebras is a $*$-antihomomorphism if and only if it is a $*$-homorphism $A\to B^{op}$.

Suppose that $M$ and $N$ are von Neumann algebras and $T:M\to N$ is a unital, bijective, normal Jordan homomorphism.  Write $T=T_1+T_2$, where $T_1:M\to N$ and $T_2:M\to N^{op}$ are $*$-homomorphisms.  Since $T_i(1)$ is a projection for $i=1,2$ and $T_1(1)+T_2(1)=1$, $T_1(1)$ and $T_2(1)$ are orthogonal projections.  Since $T(M)=N$, it follows that each $T_i(1)$ is a central projection.  Thus, if $N$ is a factor, it follows that $\{T_1(1),T_2(1)\}=\{0,1\}$, whence $T$ is either an isomorphism or an anti-isomorphism.

The following is basically Proposition 4.6 in \cite{kirch}.

\begin{prop}[Kirchberg]\label{jordan} Suppose that $M$ and $N$ are II$_1$ factors. If there is an isometry $T: L^2(M,\tr_M)\to L^2(N,\tr_N)$ so that $T$ maps $M$ onto $N$ contractively, then $M\cong N$ or $M\cong N^{op}$.
\end{prop}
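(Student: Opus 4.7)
The plan is to show that (after a unitary adjustment) $T|_M \colon M\to N$ is a unital, bijective, normal Jordan $*$-homomorphism, and then invoke the Fact together with the discussion preceding the Proposition to split it into an isomorphism or an anti-isomorphism.

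First, I would show that $u := T(1) \in N$ is a unitary. Since $T$ is operator-norm contractive, $u^*u \le 1$, while the $L^2$-isometry property gives $\tr_N(u^*u) = \nor{u}_2^2 = \nor{1}_2^2 = 1$. Faithfulness of $\tr_N$ then forces $u^*u = 1$, and since $N$ is finite, $u$ is unitary. Replacing $T$ by $x\mapsto u^*T(x)$ --- which remains an $L^2$-isometry, bijective onto $N$, and operator-contractive --- we may assume $T(1) = 1$. Trace preservation then follows from
\[
\tr_N(T(x)) = \ip{1}{T(x)}_{L^2(N)} = \ip{T(1)}{T(x)}_{L^2(N)} = \ip{1}{x}_{L^2(M)} = \tr_M(x).
\]

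The core of the argument is to show that $T|_M$ is a Jordan $*$-homomorphism, in two stages: (a) $T$ is $*$-preserving, i.e.\ $T(x^*) = T(x)^*$ for all $x\in M$; and (b) $T(x^2) = T(x)^2$ for every self-adjoint $x\in M$. Stage (a) is the main obstacle and is where Kirchberg's argument from \cite{kirch} enters: the unital, trace-preserving, $L^2$-isometric, operator-contractive hypothesis is rigid enough to exclude any genuine mixing of self-adjoint and skew-adjoint parts, since any such mixing would violate either the operator-norm bound $\nor{T(x)}_\infty\le\nor{x}_\infty$ or the $L^2$-isometry. Once (a) is established, $T$ is $*$-linear, unital, and contractive, hence a positive map (standard: for self-adjoint $a$ with $\nor{a}_\infty\le1$, the image $T(a)$ is self-adjoint with $\nor{T(a)}_\infty\le1$, so $-1\le T(a)\le 1$; apply to $a=2b-1$ for $b\in[0,1]$). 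Kadison's inequality for unital positive maps on normal elements then gives $T(x^*x)\ge T(x)^*T(x)$ for every normal $x\in M$, and the $L^2$-isometry supplies
\[
\tr_N(T(x^*x)) = \tr_M(x^*x) = \nor{x}_2^2 = \nor{T(x)}_2^2 = \tr_N(T(x)^*T(x)),
\]
so faithfulness of $\tr_N$ upgrades the Schwarz inequality to equality: $T(x^*x) = T(x)^*T(x)$ for all normal $x$. Specializing to $x=x^*$ yields (b), and polarization gives $T(x\circ y) = T(x)\circ T(y)$ for all self-adjoint $x,y$. Normality of $T$ follows from $L^2$-continuity together with the coincidence of $L^2$- and $\sigma$-strong topologies on bounded subsets of a tracial von Neumann algebra.

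With $T$ thus a unital, bijective, normal Jordan $*$-homomorphism between the II$_1$ factors $M$ and $N$, the discussion immediately preceding the Proposition applies verbatim: writing $T=T_1+T_2$ via the Fact, the projections $T_i(1)$ are central in $N$, so one of them vanishes, giving either $M\cong N$ or $M\cong N^{op}$. The entire argument thus reduces to the $*$-preservation step (a), which is the real technical heart of Kirchberg's proof.
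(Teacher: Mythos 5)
Your argument is correct in outline, but it reaches the Jordan structure by a different route than the paper. The paper first shows that $T$ maps \emph{every} unitary to a unitary (via the equality case of Cauchy--Schwarz applied to $\ip{T(u)^*T(u)}{1}$), then passes to $T'(x):=T(1)^*T(x)$ and cites the reasoning of Kirchberg's Proposition 4.6 wholesale to conclude that $T'$ is a weakly continuous Jordan morphism; you instead normalize $T(1)$, establish positivity, and derive multiplicativity on self-adjoints from Kadison's inequality $T(a^2)\geq T(a)^2$ together with $\tr_N(T(a^2))=\tr_M(a^2)=\nor{T(a)}_2^2=\tr_N(T(a)^2)$ and faithfulness of the trace --- a clean rigidity argument that replaces the appeal to Kirchberg at that step. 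The one point you do defer to Kirchberg, your stage (a), is not in fact the technical heart: a unital contractive linear map between unital C$^*$-algebras is automatically self-adjoint and positive (the classical numerical-range argument, e.g.\ from $\nor{a+it}^2=\nor{a}^2+t^2$ for self-adjoint $a$, which is precisely the ``no mixing of self-adjoint and skew-adjoint parts'' heuristic you state), so no $L^2$ information is needed there; writing out that standard fact closes your proof and makes it self-contained, arguably more so than the paper's citation. Two minor points: once self-adjointness is obtained this way, your separate positivity paragraph is subsumed, and Kadison's inequality as such concerns self-adjoint elements (the normal-element version is Choi's), but self-adjoints are all you use, so nothing is lost. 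Your treatment of the normalization by $T(1)$, trace preservation, normality, and the final reduction to the Fact and the factoriality discussion matches the paper.
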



\begin{proof} We first show that $T$ maps unitaries to unitaries. If $u\in M$ is a unitary, we have $1 = \nor{u}_2^2 = \nor{T(u)}_2^2 = \ip{T(u)}{T(u)} = \ip{T(u)^*T(u)}{1}$. On the other hand $\nor{T(u)^*T(u)}_2\leq \nor{T(u)}\cdot \nor{T(u)}_2\leq 1$.  It follows that $T(u)^*T(u) = 1$. We thus have that $T'(x) := T(1)^*T(x)$ is unital, contractive, trace-preserving, and takes unitaries to unitaries. By the same reasoning as in the proof of \cite[Proposition 4.6]{kirch}, $T'$ is a weakly-continuous Jordan morphism and the result follows from the discussion preceding this proposition.
\end{proof}


\begin{cor}
Suppose that $M$ and $N$ are II$_1$ factors.  Then $M$ is locally equivalent to $N$ if and only if $M$ is elementarily equivalent to $N$ or to $N^{op}$.  In particular, if $M$ and $N$ are II$_1$ factors belonging to the class $\kop$, then $M$ is locally equivalent to $N$ if and only if $M$ is elementarily equivalent to $N$.
\end{cor}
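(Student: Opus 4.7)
The plan is to deduce this corollary by combining Proposition \ref{EFBP} (which links local equivalence to elementary equivalence of Banach pairs) with Kirchberg's Proposition \ref{jordan} applied at the level of tracial ultrapowers. The first thing I would observe is that, by comparing definitions, the Banach pair ultrapower $(M,(M)_1)^\uu$ is canonically identified with $(M^\uu,(M^\uu)_1)$, where $M^\uu$ denotes the tracial ultrapower: operator-norm-bounded sequences modulo $\nor{\cdot}_2$-null sequences. In particular, a Banach pair isomorphism $(M^\uu,(M^\uu)_1)\cong (N^\uu,(N^\uu)_1)$ amounts to a linear $\nor{\cdot}_2$-isometry $T:M^\uu\to N^\uu$ carrying $(M^\uu)_1$ onto $(N^\uu)_1$, which extends by density to a Hilbert space isometry $L^2(M^\uu,\tr)\to L^2(N^\uu,\tr)$ mapping $M^\uu$ onto $N^\uu$ contractively.

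For the forward direction, suppose that $M\cong_{\loc} N$. By Proposition \ref{EFBP} applied to $(M,(M)_1)$ and $(N,(N)_1)$ there is an ultrafilter $\uu$ with $(M,(M)_1)^\uu\cong (N,(N)_1)^\uu$ as Banach pairs, hence an $L^2$-isometry as above. Since tracial ultrapowers of II$_1$ factors are again II$_1$ factors, Proposition \ref{jordan} yields either $M^\uu\cong N^\uu$ or $M^\uu\cong (N^\uu)^{\op}$; using the remark at the start of Section 1 that opposites commute with tracial ultraproducts, the second option gives $M^\uu\cong (N^{\op})^\uu$. In either case the Keisler--Shelah theorem for tracial von Neumann algebras yields $M\equiv N$ or $M\equiv N^{\op}$. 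Conversely, the identity map $N\to N^{\op}$ is a $\nor{\cdot}_2$-isometry preserving the operator-norm unit ball setwise, so $(N,(N)_1)\cong (N^{\op},(N^{\op})_1)$ as Banach pairs. Consequently, if $M$ is elementarily equivalent (as a tracial von Neumann algebra) to $N$ or to $N^{\op}$, then $(M,(M)_1)$ and $(N,(N)_1)$ have isomorphic ultrapowers as Banach pairs, so they are locally equivalent by Proposition \ref{EFBP}.

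Finally, the ``in particular'' clause is immediate: when $M,N\in\kop$ we have $N\equiv N^{\op}$, so the two alternatives $M\equiv N$ and $M\equiv N^{\op}$ collapse into one, and local equivalence becomes the same as elementary equivalence. I do not expect any serious obstacle here; the most delicate point is simply keeping straight that the Banach pair ultrapower really is the tracial ultrapower (so that Kirchberg's proposition is applicable) and that an isomorphism of Banach pairs produces precisely the kind of $L^2$-isometry hypothesized in Proposition \ref{jordan}.
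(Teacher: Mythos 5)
Your argument is correct and is essentially the paper's own proof: Proposition \ref{EFBP} to pass from local equivalence to an isomorphism of Banach-pair ultrapowers (i.e.\ a trace-preserving $L^2$-isometry carrying $M^\uu$ onto $N^\uu$ contractively), Kirchberg's Proposition \ref{jordan} to conclude $M^\uu\cong N^\uu$ or $M^\uu\cong (N^{\mathrm{op}})^\uu$, and Keisler--Shelah plus the compatibility of opposites with ultrapowers to finish, with the converse and the $\kop$ clause handled exactly as the paper intends. The only difference is cosmetic: the paper first reduces to separable $M$ and $N$ via Downward L\"owenheim--Skolem, a step you omit, but this does not affect the substance of the argument.
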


\begin{proof}
By the Downward L\"owenheim-Skolem Theorem (see \cite[Section 4.2]{mtoa2}), we may suppose that $M$ and $N$ are separable.  Suppose that $M$ is locally equivalent to $N$.  Then by Proposition \ref{EFBP}, there is an isometry $L^2(M^\uu)\to L^2(N^\uu)$ that maps $M^\uu$ into $N^\uu$ contractively.  By Proposition \ref{jordan}, $M^\uu$ is isomorphic to either $N^\uu$ or $(N^\uu)^{op}$.  It follows that $M$ is elementarily equivalent to either $N$ or $N^{op}$.  The converse is trivial.
\end{proof}

We now introduce a more useful test for determining elementary equivalence which works in the more specific case of Banach pairs $(M, (M)_1)$ where $M$ is a $\rm II_1$-factor (or more generally a tracial von Neumann algebra) equipped with the $2$-norm and $(M)_1$ is the (operator norm) unit ball of $M$.

We define the game $\fr G_{\vN}(n,\e)$ in parameters $n$ and $\e>0$ which is played by two players with $\rm II_1$-factors $M$ and $N$ as follows.

\noindent{\bf Step i.} Player I chooses a unitary either $u_i\in U(M)$ or $v_i\in U(N)$. Player II then chooses a unitary, respectively $v_i\in U(N)$ or $u_i\in U(M)$ in the same manner.

\noindent{\bf Step n.} The players make their choices, and the game terminates. Player II wins if $\abs{\ip{u_i}{u_j} - \ip{v_i}{v_j}}<\e$ for all $1\leq i,j\leq n$; otherwise, Player I wins.

\begin{thm}\label{local-ii} The $\rm II_1$-factors $M$ and $N$ are locally equivalent if and only if Player II has a winning strategy for the game $\fr G_{\vN}(n,\e)$ for all parameters $(n,\e)$.
\end{thm}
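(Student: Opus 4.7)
The plan is to pass to ultraproducts via Proposition \ref{EFBP} and an analogous Keisler--Shelah characterization for the unitary game. First, I would observe that $\fr G_{\vN}$ is essentially the continuous Ehrenfeucht--Fra\"iss\'e game for $U(M)$ and $U(N)$ regarded as $\bb Z_4$-metric spaces, since the complex inner product of two unitaries is recoverable from $2$-norm distances together with multiplication by $i$ via the identities $\|u-v\|_2^2 = 2 - 2\operatorname{Re}\langle u,v\rangle$ and $\|iu-v\|_2^2 = 2 + 2\operatorname{Im}\langle u,v\rangle$. By the Keisler--Shelah theorem applied in the language of $\bb Z_4$-metric spaces, Player II winning $\fr G_{\vN}(n,\e)$ for all $(n,\e)$ is equivalent to the existence of an ultrafilter $\uu$ with $U(M)^\uu \cong U(N)^\uu$ as $\bb Z_4$-metric spaces. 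Using the standard fact that $U(P^\uu) = U(P)^\uu$ for any $\rm II_1$ factor $P$ (an approximate unitary is close in $2$-norm to a genuine unitary via polar decomposition), together with Proposition \ref{EFBP}, the theorem reduces to showing: $(M^\uu, (M^\uu)_1) \cong (N^\uu, (N^\uu)_1)$ as Banach pairs if and only if $U(M^\uu) \cong U(N^\uu)$ as $\bb Z_4$-metric spaces.

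The forward direction of this equivalence is routine. A Banach pair isomorphism $\varphi$ is $\bb C$-linear, $2$-isometric, and bijects $(M^\uu)_1$ with $(N^\uu)_1$; moreover, in any $\rm II_1$ factor $P$ the unitaries are exactly $\{x \in (P)_1 : \|x\|_2 = 1\}$ (since $x^*x \leq 1$ together with $\tr(x^*x)=1$ forces $x^*x=1$, and then $xx^*$ is a projection of trace one, hence equals $1$). Therefore $\varphi$ restricts to a $\bb Z_4$-equivariant $2$-isometry of unitary groups.

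For the converse, given a $\bb Z_4$-metric isomorphism $\phi : U(M^\uu) \to U(N^\uu)$, I would extend $\phi$ $\bb C$-linearly to a map $\varphi : M^\uu \to N^\uu$ by setting $\varphi(\sum_k c_k u_k) := \sum_k c_k \phi(u_k)$. Well-definedness and $2$-isometry of $\varphi$ follow from the fact that $\phi$ preserves all inner products of unitary tuples (hence Gram matrices); surjectivity together with the fact that the domain of $\varphi$ is all of $M^\uu$ both follow from the classical observation (a consequence of the functional calculus for self-adjoints) that every element of a von Neumann algebra is a finite $\bb C$-linear combination of unitaries. The main obstacle is verifying $\varphi((M^\uu)_1) = (N^\uu)_1$, for which I would use the following mild strengthening of Russo--Dye: in a $\rm II_1$ factor $P$, the unit ball $(P)_1$ coincides with the $2$-norm closed convex hull of $U(P)$. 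The inclusion $\supseteq$ follows from the operator-norm Russo--Dye theorem combined with $\|\cdot\|_2 \leq \|\cdot\|$, while for $\subseteq$ one uses that $(P)_1$ is $2$-closed (on bounded sets $2$-norm convergence agrees with SOT convergence, and $(P)_1$ is WOT-closed). Since $\varphi$ is $2$-continuous, it carries the $2$-closed convex hull of $U(M^\uu)$ onto the $2$-closed convex hull of $U(N^\uu)$, establishing $\varphi((M^\uu)_1) = (N^\uu)_1$ and completing the proof.
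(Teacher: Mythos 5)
Your proposal is correct, but it takes a genuinely different route from the paper's in both directions. For the forward direction the paper transfers strategies directly: from a winning strategy in the Banach-pair game $\fr G(n,\delta)$ it manufactures one for $\fr G_{\vN}(n,\e)$, using Lemma \ref{unitary} to replace Player II's almost-isometric images of unitaries by nearby unitaries and then controlling the inner products by polarization and perturbation; you instead pass through Proposition \ref{EFBP} and restrict a Banach-pair isomorphism of ultrapowers to the unitary groups (via the characterization of unitaries as contractions of $2$-norm one), which is essentially the alternative proof the paper flags in the remark after its $\cal L_i$-corollary, except that you avoid the detour through Kirchberg's Proposition \ref{jordan}. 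For the converse the paper argues syntactically: it introduces the ``unitary transform'' $\sigma^u$, $\sigma^{uu}$ of a prenex sentence, uses that every contraction in a finite factor is an average of two unitaries and that a contraction of $2$-norm close to one is $2$-close to a unitary, and transfers truth values through $\fr G_{\vN}$ to conclude elementary equivalence of the Banach pairs outright; you instead argue semantically, combining the continuous EF-game theorem and Keisler--Shelah for $\cal L_i$-structures with the identification $U(P)^\uu=U(P^\uu)$ and then extending the $\bb Z_4$-isometry $\phi$ linearly using preservation of Gram matrices, matching the unit balls via the $2$-closed convex hull of the unitaries. Your route buys a stronger structural by-product (any surjective $\bb Z_4$-equivariant isometry between unitary groups of II$_1$ factors extends to a Banach-pair isomorphism --- the same extension trick the paper uses later in Proposition \ref{Z4universal}), at the cost of leaning on Keisler--Shelah and the EF-game characterization, which the paper only quotes from unpublished notes; the paper's proof is self-contained at the level of formulas and strategies and keeps explicit control of the $\e$'s. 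Two cosmetic points: your inclusion labels in the Russo--Dye step are swapped (Russo--Dye together with $\|\cdot\|_2\le\|\cdot\|$ gives $(P)_1\subseteq$ the $2$-closed hull, while $2$-closedness of $(P)_1$ gives the reverse inclusion), and the $2$-closedness of $(P)_1$ is more safely justified by weak compactness of the operator-norm ball, since a $2$-limit of contractions is not a priori known to be a contraction; both facts are standard and your argument stands.
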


In order to prove this result we will first need one lemma.

\begin{lem}\label{unitary} Let $M$ and $N$ be $\rm II_1$-factors, $E\subset M$ and $F\subset N$ be subspaces, and $T: (E, E\cap (M)_1)\to (F,F\cap (N)_1)$ is an $\e$-almost isometry. If $u\in E$ is a unitary, then there exists a unitary $v\in N$ so that $\nor{T(u) - v}_2\leq 4\sqrt{\e}$.
\end{lem}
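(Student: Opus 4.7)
The plan is to approximate $T(u)$ first by a contractive element $w$ of $N$, and then round $w$ to a unitary via polar decomposition inside the $\rm II_1$ factor $N$. Since $u$ is a unitary, $u\in E\cap (M)_1$, and the containment $T(E\cap (M)_1)\subset_\e F\cap (N)_1$ produces some $w\in N$ with $\nor{w}\le 1$ and $\nor{T(u)-w}_2\le\e$. Applying the bound $\nor{T^{-1}}\le 1+\e$ to $T(u)$ gives $\nor{T(u)}_2\ge (1+\e)^{-1}\ge 1-\e$, so $\nor{w}_2\ge 1-2\e$ and hence $\nor{w}_2^2\ge 1-4\e$. (We may assume $\e<1/2$, else the conclusion is trivial.)

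Next, take the polar decomposition $w=v_0|w|$ inside $N$. Because $N$ is a finite factor, the defect projections $1-v_0^*v_0$ and $1-v_0v_0^*$ have equal trace and are therefore Murray--von Neumann equivalent in $N$. Picking a partial isometry $w_1\in N$ realizing this equivalence and setting $v:=v_0+w_1$ yields a unitary $v\in U(N)$. By orthogonality of the ranges, $v^*w=v_0^*w=|w|$, so
\[\nor{w-v}_2^2 = \tr(w^*w) - 2\tr(|w|) + 1 = \nor{\,|w|-1\,}_2^2.\]

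The key estimate is that since $0\le |w|\le 1$ we have $|w|^2\le |w|$, whence
\[\nor{\,|w|-1\,}_2^2 \le \tr(1-|w|^2) = 1-\nor{w}_2^2 \le 4\e,\]
and the triangle inequality then gives $\nor{T(u)-v}_2\le \e + 2\sqrt{\e}\le 4\sqrt{\e}$. The main subtlety lies precisely in this rounding step: the inequality $|w|^2\le |w|$ requires $w$ to be an operator-norm contraction, which is exactly what the $\e$-almost isometry supplies via the containment $T(E\cap (M)_1)\subset_\e F\cap (N)_1$, as opposed to merely a $\nor{\cdot}_2$-close approximation of $T(u)$.
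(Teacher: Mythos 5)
Your proof is correct and follows essentially the same route as the paper: use the containment $T(E\cap (M)_1)\subset_\e F\cap (N)_1$ to get a contraction $w$ that is $\e$-close to $T(u)$ in $2$-norm, then use polar decomposition in the finite factor $N$ (with the partial isometry completed to a unitary) and the estimate $\nor{1-\abs{w}}_2^2\le 1-\nor{w}_2^2$ to round $w$ to a unitary. Your write-up is if anything a bit more careful than the paper's, which simply writes $y=v\abs{y}$ with $v\in U(N)$, leaving the finiteness argument you spell out implicit.
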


\begin{proof} In a $\rm II_1$-factor a $u$ is a unitary if and only if it is a contraction with $\nor{u}_2=1$. By definition, we see that there exists a contraction $y\in N$ with $\nor{y -T(u)}_2\leq \e$. In particular $\nor{y}_2\geq 1-2\e$. By a standard estimate we have that $$\nor{1 - \abs{y}}_2^2\leq  1 + \nor{\abs{y}}_2^2 - 2\tr(\abs{y}) = 1 + \tr(\abs{y}^2) -2\tr(\abs{y})\leq 1 -\tr(\abs{y})\leq 1 - \nor{y}_2\leq 2\e,$$ whence writing $y = v\abs{y}$ for $v\in U(N)$ we have that $\nor{T(u) -v}_2\leq 4\sqrt{\e}$.
\end{proof}

\begin{proof} [Proof of Theorem \ref{local-ii}]

First suppose that $M$ and $N$ are locally equivalent.  Fix $n$ and $\e>0$; we describe a winning strategy for Player II in the game $\frak G_{\vN}(n,\e)$.  For simplicity, we suppose that $n=2$ and describe a winning strategy for Player II; the general case is no more difficult, only the notation is more cumbersome.  Fix $\delta$ sufficiently small (to be specified later) and fix a winning strategy $\cal S$ for Player II in the game $\frak G(2,\delta)$.    Suppose that Player I first plays $u_1\in U(M)$.  (The case that Player I first plays a unitary in $N$ is similar.)  Let $y_1\in N$ be Player II's response to $u_1$ in the game $\frak G(2,\delta)$ according to $\cal S$.  Since $u_1\mapsto y_1$ determines a $\delta$-almost isometry, by Lemma \ref{unitary}, there is $v_1\in U(N)$ such that $\|y_1-v_1\|_2\leq 4\sqrt{\delta}$.  Now suppose that Player II responds with $v_2\in U(N)$.  (The case that Player II responds with a unitary in $M$ is similar.)  Let $x_2\in M$ be Player II's response to $(u_1,y_1,v_2)$ in the game $\frak G(2,\delta)$ according to $\cal S$.  Since $u_1\mapsto y_1$, $x_2\mapsto v_2$ determines a $\delta$-almost isometry, we once again have $u_2\in U(M)$ such that $\|x_2-u_2\|_2\leq 4\sqrt{\delta}$.

We need to verify that $|\langle u_i,u_j\rangle -\langle v_i,v_j\rangle |<\e$ for $i,j=1,2$.  If $\delta$ is chosen small enough so that a $\delta$-almost isometry preserves inner products within an error of $\frac{\e}{3}$ (use, for example, the Polarization Identity) and such that perturbing entries of an inner product by a distance of no more than $4\sqrt{\delta}$ changes the inner product by an amount not exceeding $\frac{\e}{3}$, then the desired estimates hold.  For example:
$$\langle u_1,u_2\rangle \sim_{\frac{\e}{3}} \langle u_1,x_2\rangle \sim_{\frac{\e}{3}} \langle y_1,v_2\rangle \sim_{\frac{\e}{3}} \langle v_1,v_2\rangle.$$


We now prove the converse.  Suppose that Player II has a winning strategy in all of the games $\frak G_{\vN}(n,\e)$; we show that $M$ and $N$ are elementarily equivalent as Banach pairs.  By symmetry, it is enough to show that $\sigma^{(M, (M)_1)}\leq r$ implies that $\sigma^{(N,(N)_1)}\leq r$ for any positive real number $r$ and any prenex normal form sentence $\sigma$.  Since $\sigma\dotminus r$ is equivalent to a prenex normal form sentence, it is enough to prove that $\sigma^{(M,(M)_1)}=0$ implies $\sigma^{(N,(N)_1)}=0$ for any prenex normal form sentence $\sigma$.

Towards this end, we introduce the ``unitary transform'' of a sentence in prenex normal form.  Suppose that $\sigma$ is a sentence in prenex normal form, say $$\sigma=Q_1x_1\cdots Q_n x_n \varphi(\vec x),$$ where $\varphi(\vec x)$ is quantifier-free.  We form the new sentence $\sigma^u$ as follows:
\begin{itemize}
\item If $Q_i=\inf$ and $x_i$ is of sort $n_i$, replace each occurrence of the variable $x_i$ by the term $t_i(u_i,v_i):=n_i\cdot (\frac{u_i+v_i}{2})$, where $u_i$ and $v_i$ are variables of sort $\cal C_1$, and replace the quantifier $Q_i x_i$ by the quantifiers $Q_i u_i Q_i v_i$.
\item The quantifier-free part of $\sigma^u$ should now be $$\max(\varphi,\max_i(\max(1\dotminus \|u_i\|_2,1\dotminus \|v_i\|_2))).$$
\end{itemize}

For example, if $\sigma=\sup_{x_1}\inf_{x_2} \varphi(x_1,x_2)$ where $x_2$ is of sort $\mathcal C_1$ (for simplicity), then $\sigma^u=\sup_{x_1}\inf_{u_2}\inf_{v_2} \varphi(x_1,\frac{u_2+v_2}{2})$.

Also, we let $\sigma^{uu}$ be the ``formula'' defined in the exact same way as $\sigma^u$ except that we only allow quantifiers over the unitary groups rather than the entire unit ball.  (Formally, $\sigma^{uu}$ is not a formula in the sense of continuous logic, but it will be useful in the remainder of the proof.)

\

\noindent \textbf{Claim 1:}  We have $\sigma^{(M,(M)_1)}=0$ if and only if $(\sigma^u)^{(M,(M)_1)}=0$ (and the corresponding statement for $(N,(N)_1)$).

\

\noindent Claim 1 follows from the fact that, in a \emph{finite} von Neumann algebra, any contraction is an average of two unitaries. Indeed if $x$ is a contraction in a finite von Neumann algebra, then it has polar decomposition $x = u |x|$, where $u$ is a unitary. As $|x|$ is a self-adjoint contraction, by functional calculus it may be written as the average of two unitaries.

\

\noindent \textbf{Claim 2:}  $(\sigma^u)^{(M,(M)_1)}=0$ if and only if $(\sigma^{uu})^{(M,(M)_1)}=0$ (and the corresponding statement for $(N,(N)_1)$).

\

\noindent The backwards direction of Claim 2 is trivial; the forwards direction follows from the fact that if $x$ is a contraction in a finite factor and $\nor{x}_2\geq 1-\e$, then there is a unitary $u$ so that $\nor{u - x}_2\leq 2\sqrt \e$.

\

\noindent Finally, suppose that $\sigma$ is a sentence in prenex normal form and $\sigma^{(M,(M)_1)}=0$.  Then by Claims 1 and 2, we have $(\sigma^{uu})^{(M,(M)_1)}=0$.  Since atomic formulae are of the form $\nor{\lambda_1x_1 + \dotsb + \lambda_nx_n}_2$ and arbitrary quantifier-free formulae are continuous combinations of atomic formulae, it follows from a winning strategy for Player II in $\frak G_{\vN}(n,\e)$ (for suitably small $\e$) that $(\sigma^{uu})^{(N,(N)_1)}=0$, whence $\sigma^{(N,(N)_1)}=0$ by Claims 1 and 2 again.

\end{proof}

Suppose now that $\cal L_i=\{\phi\}$, where $\phi$ is a unary function symbol with modulus of uniform continuity $\Delta_\phi(\e)=\e$.  If $M$ is a tracial von Neumann algebra, we view $U(M)$ as an $\cal L_i$-structure by interpreting $\phi$ as multiplication by $i$.  We then have:

\begin{cor} Let $M$ and $N$ be $\rm II_1$-factors.  Then $M$ and $N$ are locally equivalent if and only if $U(M)$ and $U(N)$ are elementarily equivalent as $\cal L_i$-structures.
\end{cor}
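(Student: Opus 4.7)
The plan is to reduce to Theorem \ref{local-ii} and then compare the game $\fr G_{\vN}(n,\e)$ on the factors with Hart's Ehrenfeucht--Fra\"iss\'e characterization of elementary equivalence of $\cal L_i$-structures recalled in the Introduction. By Theorem \ref{local-ii}, $M$ and $N$ are locally equivalent if and only if Player II has a winning strategy in $\fr G_{\vN}(n,\e)$ for every $n\ge 1$ and every $\e>0$. On the other hand, by Hart's characterization, $U(M)\equiv U(N)$ as $\cal L_i$-structures if and only if Player II has a winning strategy in each Hart game $\fr G(\vp_1,\ldots,\vp_k,\e)$ played with these structures, where $\vp_1,\ldots,\vp_k$ are atomic $\cal L_i$-formulae in variables $x_1,\ldots,x_n$. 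It therefore suffices to show that these two families of games are interderivable, with uniform conversions of the tolerance.

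Since $\cal L_i=\{\phi\}$ contains only the metric and the unary isometry $\phi$, the non-trivial atomic formulae in variables $x_1,\ldots,x_n$ are of the form $d(\phi^{k_1}x_i,\phi^{k_2}x_j)$ with $i,j\in\{1,\ldots,n\}$ and $k_1,k_2\in\{0,1,2,3\}$. Evaluated on unitaries, and using that multiplication by $i^{k_1}$ is an $\ell_2$-isometry, this atomic data reduces to the finite list of quantities $\nor{u_i - i^\ell u_j}_2$ for $\ell\in\{0,1,2,3\}$ and $i,j\le n$. The polarization identity
$$\nor{u_i - i^\ell u_j}_2^2 = 2 - 2\Re\bigl(i^{-\ell}\ip{u_i}{u_j}\bigr)$$
shows that the inner products $\ip{u_i}{u_j}$ and this atomic data determine each other through Lipschitz transformations whose moduli depend only on $n$, not on the choice of unitaries.

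Consequently, a winning strategy for Player II in $\fr G_{\vN}(n,\e)$ transports, after shrinking $\e$ by a fixed factor determined by the polarization identity, to a winning strategy in each Hart game $\fr G(\vp_1,\ldots,\vp_k,\e')$ whose atomic formulae come from the list above (and we may supply Player II's response to any Hart move as the corresponding unitary directly, since the underlying sort of $U(M)$, $U(N)$ is already the unitary group). Conversely, a winning strategy in the Hart games yields, after a symmetric shrinking of the tolerance, one for $\fr G_{\vN}(n,\e)$. Neither direction involves a substantive obstacle; the only care needed is bookkeeping to confirm that the Lipschitz conversion between the two families of atomic quantities respects the prescribed $\e$ uniformly, which is transparent from the polarization formula. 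Combined with the two characterizations above, this yields the corollary.
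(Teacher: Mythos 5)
Your argument is correct, but it takes a genuinely different route from the paper in one of the two directions. For the implication ``$U(M)\equiv U(N)$ as $\cal L_i$-structures $\Rightarrow$ local equivalence'' you do exactly what the paper does: transfer Player II's strategies in the Hart games to the games $\fr G_{\vN}(n,\e)$ via the identity $\nor{u-i^\ell v}_2^2=2-2\Re\bigl(i^{-\ell}\ip{u}{v}\bigr)$ and then quote Theorem \ref{local-ii}. For the converse, however, the paper does not translate games at all: it invokes the earlier Kirchberg-based corollary (local equivalence implies $M\equiv N$ or $M\equiv N^{op}$), then Keisler--Shelah together with the commutation of ultrapowers with unitary groups (and the fact that $U(N^{op})$ and $U(N)$ coincide as $\bb Z_4$-metric spaces) to conclude $U(M)\equiv U(N)$. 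You instead run the same polarization dictionary in the reverse direction, converting a winning strategy in $\fr G_{\vN}(n,\e)$ into winning strategies for all Hart games; this is legitimate, since every atomic $\cal L_i$-formula evaluates on unitaries to a quantity of the form $\nor{u_i-i^\ell u_j}_2$, the two games have identical move structure, and the tolerance conversion is uniform. What each approach buys: yours is purely game-theoretic and, for this corollary, avoids Kirchberg's theorem, Downward L\"owenheim--Skolem, Keisler--Shelah and ultrapowers entirely; the paper's version reuses machinery already in place and, as the remark following the corollary points out, simultaneously yields an alternative proof of the forward direction of Theorem \ref{local-ii}. Note that your proof cannot play that latter role, since it consumes the forward half of Theorem \ref{local-ii} (proved in the paper via the Banach-pair games and Lemma \ref{unitary}) as an input --- using it for that purpose would be circular --- but as a proof of the corollary itself, which may cite Theorem \ref{local-ii} in full, it is sound.
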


\begin{proof}
If $M$ and $N$ are locally equivalent, then $M$ is elementarily equivalent to either $N$ or $N^{op}$.  It follows that there is an ultrafilter $\uu$ such that $M^\uu$ is isomorphic to $N^\uu$ or $(N^{op})^\uu$.  In either case, $(U(M))^\uu=U(M^\uu)$ is isomorphic to $(U(N))^\uu=U(N^\uu)$ as $\cal L_i$-structures, whence $U(M)$ and $U(N)$ are elementarily equivalent as $\cal L_i$-structures.

Conversely, assume that $U(M)$ and $U(N)$ are elementarily equivalent as $\cal L_i$-structures.  Then Player II has a winning strategy for the EF-games for $U(M)$ and $U(N)$ as $\cal L_i$-structures.  It then follows that Player II has a winning strategy in the games $\frak G_{\vN}$ for $M$ and $N$.  Indeed, this follows from the fact that
$$\Re \langle u_i,u_j\rangle =1-\frac{1}{2}d(u_i,u_j)^2, \quad \Im \langle u_i,u_j\rangle=1-\frac{1}{2}d(u_i,i\cdot u_j)^2.$$
\end{proof}

\begin{rem}
Notice that the proof of the previous corollary gives an alternative proof of the forward direction of Theorem \ref{local-ii}.
\end{rem}

\begin{cor}\label{kop-ee}
Let $M$ and $N$ be $\rm II_1$ factors in the class $\kop$.  Then $M$ and $N$ are elementarily equivalent if and only if $U(M)$ and $U(N)$ are elementarily equivalent as $\cal L_i$-structures.
\end{cor}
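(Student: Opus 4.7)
The plan is to deduce this corollary almost immediately by chaining together the three preceding results: (i) the characterization of local equivalence of II$_1$ factors in terms of elementary equivalence of unitary groups as $\cal L_i$-structures (the previous corollary), (ii) the fact that for II$_1$ factors, local equivalence coincides with being elementarily equivalent to either $N$ or $N^{op}$ (the earlier corollary following Proposition \ref{jordan}), and (iii) the defining property of the class $\kop$, namely that $N\equiv N^{op}$ whenever $N\in\kop$.

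For the forward direction, I would first note that if $M\equiv N$ then in particular $M$ and $N$ are locally equivalent (elementary equivalence of Banach pairs trivially implies local equivalence by Proposition \ref{EFBP}). The previous corollary then immediately gives that $U(M)$ and $U(N)$ are elementarily equivalent as $\cal L_i$-structures. This direction does not even require the $\kop$ hypothesis.

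For the reverse direction, assume $U(M)\equiv U(N)$ as $\cal L_i$-structures. Applying the previous corollary, $M$ and $N$ are locally equivalent. Then by the corollary following Proposition \ref{jordan}, either $M\equiv N$ or $M\equiv N^{op}$. Since $N\in\kop$, we have $N\equiv N^{op}$, so in either case $M\equiv N$, as desired.

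There is essentially no obstacle here; the only thing to be careful about is that the $\kop$ hypothesis is genuinely needed only in the reverse direction, and it is used precisely to collapse the dichotomy ``$M\equiv N$ or $M\equiv N^{op}$'' produced by Kirchberg's Jordan morphism argument into the single conclusion $M\equiv N$. Accordingly, the write-up can be very short — a few lines citing the previous corollary twice and invoking the definition of $\kop$ once.
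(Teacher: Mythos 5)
Your proposal is correct and is essentially the argument the paper intends: the corollary is obtained exactly by chaining the corollary characterizing local equivalence via $U(M)\equiv U(N)$ as $\cal L_i$-structures with the earlier corollary (following Proposition \ref{jordan}) that local equivalence means $M\equiv N$ or $M\equiv N^{op}$, and then using $N\equiv N^{op}$ from the definition of $\kop$. Your observation that the $\kop$ hypothesis is only needed in the reverse direction is also accurate.
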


\begin{cor} Let $M$ and $N$ be $\rm II_1$-factors. Suppose for every $\e$ that there is an $(1+\e)$-Lipschitz homeomorphism $f: U(M)\to U(N)$, that is, $f$ is bijective with $(1+\e)^{-1}\nor{u - v}_2\leq \nor{f(u) - f(v)}_2\leq (1+\e)\nor{u - v}_2$, that is further assumed to preserve the action by $\mathbb Z_4$. Then $M$ and $N$ are locally isomorphic.
\end{cor}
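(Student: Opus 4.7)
The plan is to appeal to Theorem~\ref{local-ii}, which reduces local equivalence of $M$ and $N$ to the existence of winning strategies for Player II in each game $\fr G_{\vN}(n,\delta)$. The hypothesis is tailor-made for providing such strategies: a $(1+\e)$-Lipschitz $\mathbb{Z}_4$-equivariant homeomorphism $f\colon U(M)\to U(N)$ can be used directly by Player II to mimic Player I's moves, and the role of $\e$ is to be taken arbitrarily small compared to $\delta$.

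Fix $n$ and $\delta>0$, and choose $\e>0$ sufficiently small in terms of $\delta$ (e.g., so that $((1+\e)^2-1)\cdot 4 + (1-(1+\e)^{-2})\cdot 4 < \delta/2$, so that the derived inner-product estimates below come out below $\delta$). Let $f$ be the given bi-Lipschitz $\mathbb{Z}_4$-equivariant bijection for this $\e$. Player II's strategy in $\fr G_{\vN}(n,\delta)$ is as follows: whenever Player I plays a unitary $u\in U(M)$, Player II responds with $f(u)\in U(N)$; whenever Player I plays $v\in U(N)$, Player II responds with $f^{-1}(v)\in U(M)$.

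To check this strategy wins, I would invoke the identities
\[
\Re\,\ip{u}{w} = 1 - \tfrac{1}{2}\nor{u-w}_2^2,\qquad \Im\,\ip{u}{w} = 1 - \tfrac{1}{2}\nor{u - iw}_2^2,
\]
already used in the proof of the preceding corollary. Since $f$ is bi-Lipschitz with constant $1+\e$, for any pair $(u_i,u_j)$ played in $U(M)$ (with images $v_i,v_j\in U(N)$) the quantities $\nor{u_i-u_j}_2$ and $\nor{v_i-v_j}_2$ agree up to a factor of $(1+\e)^{\pm 1}$; the $\mathbb{Z}_4$-equivariance $f(iu_j)=if(v_j)$ gives the analogous comparison of $\nor{u_i - iu_j}_2$ with $\nor{v_i - iv_j}_2$. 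Because all of these norms are bounded by $2$, squaring only amplifies the error by a bounded factor, so by the choice of $\e$ each of $|\Re\,\ip{u_i}{u_j}-\Re\,\ip{v_i}{v_j}|$ and $|\Im\,\ip{u_i}{u_j}-\Im\,\ip{v_i}{v_j}|$ is below $\delta/2$, giving $|\ip{u_i}{u_j}-\ip{v_i}{v_j}|<\delta$ as required.

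There is no real obstacle here; the argument is essentially a bookkeeping of how multiplicative perturbations of bounded norms propagate through the squaring in the polarization formulas, combined with Theorem~\ref{local-ii}. The one point worth noting is that the same $\e$ must handle \emph{all} pairs $(i,j)$ with $1\le i,j\le n$, but since $n$ is fixed in advance and the estimates are uniform (the quantities involved are bounded by $2$ regardless of $n$), no dependence on $n$ enters the choice of $\e$.
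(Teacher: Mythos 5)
Your proposal is correct and is essentially the argument the paper intends: use the $\mathbb Z_4$-equivariant bi-Lipschitz map (and its inverse) as Player II's strategy in $\fr G_{\vN}(n,\delta)$, recover inner products from distances via $\Re\ip{u}{w}=1-\tfrac12\nor{u-w}_2^2$ and $\Im\ip{u}{w}=1-\tfrac12\nor{u-iw}_2^2$, and conclude by Theorem \ref{local-ii}. The bookkeeping with $\e$ chosen small relative to $\delta$ (uniformly in $n$, since all norms are bounded by $2$) is exactly the intended verification, so there is nothing to add.
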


We will say that $M$ and $N$ are \emph{approximately Lipschitz isometric} if the condition of the previous corollary is satisfied. Although this relation ought to be in principle much stronger than elementary equivalence, to the best of our knowledge the results of \cite{mtoa3} heretofore furnish the only know examples of properties invariant under this relation namely, the McDuff property and property ($\Gamma$). It is, however, tempting to speculate that approximate Lipschitz isometry ought to be equivalent to isomorphism (up to opposites). 

In lieu of this, it would be highly interesting to determine whether hyperfiniteness is an invariant of approximate Lipschitz isometry. If true, this would be in contrast with \cite[Theorem 4.3]{mtoa3} which shows in particular that hyperfiniteness is not an invariant of elementary equivalence. Though one can show, essentially by Fact 3.1 and Proposition 3.2 (see also \cite[chapter XIV.2]{tak}), that for every $n$ there exists $\e>0$ so that any $\e$-approximate Lipschitz embedding $\theta$ of $M_n$ into a $\rm II_1$-factor $N$ there is a $\ast$-homomorphism $\theta': M_n\to N$ so that the image of the unit ball under $\theta$ is $\e$-contained in $2$-norm in the image unit ball under $\theta'$ of $M_n$, this still does not seem sufficient, unless $\e$ could be taken independent of $n$.

\section{Further remarks and open problems}

Of course, Corollary \ref{kop-ee} raises the question:  which $\mathbb Z_4$-metric spaces arise as unitary groups of II$_1$ factors? Even more importantly, what are the theories of such $\mathbb Z_4$-metric spaces?  Ignoring the extra structure for a moment, an important example of a complete theory of (noncompact) metric spaces is the theory of the \emph{Urysohn metric space}.  (See, for example, \cite{ealygold}.)  Recall that the Urysohn metric space is the unique (up to isometry) complete, separable metric space that is universal (that is, every separable metric space isometrically embeds) and ultrahomogeneous (every isometry between finite--even compact--subspaces extends to an isometry of the entire space).  However, the Urysohn space (or rather, its bounded counterpart, the Urysohn sphere) could never be isometric to the unitary group of a II$_1$ factor as the latter's metric is always negative definite.

Note that for $M$ with separable predual, $U(M)$ isometrically embeds naturally in $\mathbb S^\infty$, the Hilbert sphere in $\ell^2$.  The space $\mathbb S^\infty$ is the ``Hilbertian Urysohn sphere" in the sense described in \cite{nvt}, section 1.4.2.




It is well worth pointing out the following proposition, which is an immediate consequence of Ozawa's \cite{ozawa} fundamental result on the non-existence of a universal, separable $\rm II_1$ factor.

\begin{prop}\label{Z4universal} For any separable II$_1$ factor $M$, $U(M)$ is not universal among all $\bb Z_4$-metric spaces which embed (as $\mathbb Z_4$-metric spaces) in $\mathbb S^\infty$.
\end{prop}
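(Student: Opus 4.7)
I argue by contradiction. Assume that $U(M)$ is universal among $\mathbb Z_4$-metric spaces admitting a $\mathbb Z_4$-equivariant isometric embedding into $\mathbb S^\infty$. Fix an arbitrary separable $\rm II_1$ factor $N$. Then $U(N)$ sits inside $\mathbb S^\infty$ as a $\mathbb Z_4$-metric space via the inclusion $N \hookrightarrow L^2(N)$, so by hypothesis there is a $\mathbb Z_4$-equivariant isometric embedding $f: U(N) \hookrightarrow U(M)$. The strategy is to promote $f$ to a trace-preserving $*$-embedding of $N$ into a single separable $\rm II_1$ factor built from $M$, contradicting Ozawa's theorem \cite{ozawa}.

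First, polarization applied exactly as in the proof of the preceding corollary shows that $\langle f(u), f(v)\rangle = \langle u, v\rangle$ for all $u, v \in U(N)$. The assignment $\sum_k \lambda_k u_k \mapsto \sum_k \lambda_k f(u_k)$ therefore defines a linear $L^2$-isometry on $\sp(U(N))$, which extends uniquely to a Hilbert space isometry $F: L^2(N, \tr_N) \to L^2(M, \tr_M)$. Since every element of a finite von Neumann algebra is a linear combination of unitaries, and every element of $(N)_1$ is an average of two (Claim 1 of the proof of Theorem \ref{local-ii}), $F$ restricts to an operator-norm contraction $N \to M$.

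I would then appeal to Kirchberg's argument from the proof of Proposition \ref{jordan}, observing that surjectivity of $F$ is not used in the step that extracts the Jordan structure. Setting $T(x) := F(1)^* F(x)$ produces a unital, operator-norm contractive, trace-preserving, unitary-preserving map $T: N \to M$, hence by the cited reasoning a normal Jordan $*$-homomorphism. Decomposing $T = T_1 + T_2$ with $T_1: N \to M$ a $*$-homomorphism and $T_2: N \to M^{op}$ a $*$-homomorphism, and invoking factoriality of $N$ to see that each $T_i$ is either zero or injective, one of the $T_i$ yields a unital $*$-embedding of $N$ into either $pMp$ (where $p := T_1(1)$) or into $qM^{op}q$ (where $q := T_2(1)$), with $p, q$ orthogonal projections summing to $1$.

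To finish, I would use that $pMp \otimes \R \cong M \otimes \R$ --- which follows from Murray--von Neumann equivalence in $M \otimes \R$ of $p \otimes 1$ with some $1 \otimes e$ for a projection $e \in \R$ of the same trace, together with $e\R e \cong \R$ --- to obtain a unital embedding $pMp \hookrightarrow M \otimes \R$, and likewise $qM^{op}q \hookrightarrow (M\otimes \R)^{op}$. Thus every separable $\rm II_1$ factor $N$ embeds unitally in the separable $\rm II_1$ factor $L := (M \otimes \R) \ast (M \otimes \R)^{op}$, contradicting Ozawa's non-universality theorem. The main obstacle I anticipate is verifying carefully that Kirchberg's Jordan-morphism argument goes through in the non-surjective setting of $F$, and that the compression/free-product assembly genuinely produces a single separable $\rm II_1$ factor containing all possible $N$'s.
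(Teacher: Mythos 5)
Your proposal is correct and follows essentially the same route as the paper's proof: inner-product preservation via the $\mathbb Z_4$-structure, extension to an $L^2$-isometry carrying $N$ contractively into $M$, Kirchberg's Jordan-morphism argument giving a unital embedding $N\hookrightarrow pMp\oplus((1-p)M(1-p))^{op}$, factoriality to land in $M$ or $M^{op}$, and a free product to contradict Ozawa. Your only deviation is the extra care at the end (tensoring with $\R$ to convert corner embeddings into unital embeddings before forming the free product), which the paper elides by simply writing ``pick $M\star M^{op}$.''
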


\begin{proof} Suppose, towards a contradiction, that there is a II$_1$ factor $M$ for which $U(M)$ is universal among all $\bb Z_4$-metric spaces which embed in $\bb S^\infty$.  In particular, for any $\rm II_1$-factor $N$ with separable predual, $U(N)$ isometrically embeds in $U(M)$ in a way which commutes with the action of $i$. Since this embedding respects the inner product, it is not hard to see it must extend to an isometric embedding $L^2(N)\to L^2(M)$ which takes $N$ into $M$ contractively. Thus, as above, there is a unital injective $\ast$-homomorphism $N\hookrightarrow pMp\oplus ((1-p) M (1-p))^{op}$, whence $N$ embeds in either $M$ or $M^{op}$ since $N$ is a factor. However, this would contradict the fact \cite{ozawa} that there is no separable universal $\rm II_1$-factor (pick $M\star M^{op}$).
\end{proof}

\begin{question} Can $U(M)$ ever be universal among all metric spaces which embed in $\cal S^\infty$?
\end{question}

Proposition \ref{Z4universal} is good evidence that the answer to the previous question is no. We remark that a positive answer to the previous question would be equivalent to demonstrating the existence of a separable $\rm II_1$-factor for which there is an isometric embedding $\bb S^\infty\hookrightarrow U(M)$. We currently do not know whether $\bb S^\infty$ embeds isometrically in the unitary group of \emph{any} $\rm II_1$-factor. The existence of such an embedding ought to have striking consequences as the following proposition, which is similar in  spirit, demonstrates.

\begin{prop}
Suppose that $M$ is a separable II$_1$  factor belonging to the class $\kop$.  Further suppose that, for each $n$, the $n$-dimensional complex spheres $\bb S^{n}$ isometrically embed in $U(M)$ with respect to the natural $\bb Z_4$-actions.  Then $M$ is a locally universal $\rm II_1$-factor, that is, every separable II$_1$ factor embeds into an ultrapower of $M$. In particular, if, for each $n$, the $n$-dimensional complex spheres $\bb S^{n}$ isometrically embed in $U(\R)$ with respect to the natural $\bb Z_4$-actions, then Connes' embedding problem has a positive answer.
 \end{prop}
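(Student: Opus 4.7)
The plan is to adapt the argument in the proof of Proposition \ref{Z4universal}: the sphere-embedding hypothesis lets one compress each finite piece of $U(N)$ (for an arbitrary separable $\rm II_1$-factor $N$) into $U(M)$ $\bb Z_4$-equivariantly and isometrically, an ultraproduct glues these finite pieces into an isometric embedding of unitary groups, and Kirchberg's Jordan-morphism argument then promotes this to a $*$-embedding.

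Concretely, I would fix a countable dense subset $\{u_k\}_{k\ge 1}$ of $U(N)$. For each $n$, the unitaries $u_1,\ldots,u_n$ lie on the unit sphere of $V_n := \operatorname{span}_{\bb C}\{u_1,\ldots,u_n\}\subset L^2(N)$, which is a complex sphere $\bb S^{d_n}$ with $d_n\le n$; the hypothesis supplies a $\bb Z_4$-equivariant isometric embedding $\iota_n:\bb S^{d_n}\hookrightarrow U(M)$, and I set $v_k^{(n)}:=\iota_n(u_k)$. Choosing a free ultrafilter $\uu$ on $\bb N$ and defining $\Phi:U(N)\to U(M^\uu)$ by $\Phi(u_k):=[v_k^{(n)}]_n$ (extended by uniform continuity), one obtains a $\bb Z_4$-equivariant map preserving $2$-norm distances and, since complex inner products are recoverable from the real metric together with multiplication by $i$ via polarization, also preserving inner products.

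Next, extend $\Phi$ linearly to an isometry $\tilde\Phi:L^2(N)\to L^2(M^\uu)$ commuting with multiplication by $i$; the extension is well-defined because $\nor{\sum c_j u_j}_2^2 = \sum_{i,j}\bar c_ic_j\langle u_i,u_j\rangle$ is preserved. Using the standard fact that every contraction in a finite von Neumann algebra is the average of two unitaries, $\tilde\Phi$ maps $(N)_1$ contractively into $(M^\uu)_1$. Replacing $\tilde\Phi$ by $\tilde\Phi(1)^*\tilde\Phi(\,\cdot\,)$ to achieve unitality, we are in the setting of Proposition \ref{jordan}: its proof yields a unital normal Jordan $*$-homomorphism $N\to M^\uu$, which by the Jordan-morphism decomposition (Fact 3.1) splits as $T_1+T_2$ with $T_1:N\to pM^\uu p$ a $*$-homomorphism and $T_2:N\to(1-p)M^\uu(1-p)$ a $*$-antihomomorphism for some projection $p\in M^\uu$. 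Since $N$ is a factor, at least one of $T_1,T_2$ is injective, giving an embedding of $N$ into $M^\uu$ or $(M^\uu)^{op}\cong(M^{op})^\uu$.

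Finally, $M\in\kop$ gives $M\equiv M^{op}$, whence $(M^{op})^\uu\equiv M$; by the Keisler--Shelah theorem there is an ultrafilter $\vv$ with $((M^{op})^\uu)^\vv\cong M^\vv$, so in either case $N$ embeds in the ultrapower $M^\vv$. The Connes application is then immediate: $\R\cong\R^{op}$ gives $\R\in\kop$, and the statement that every separable $\rm II_1$-factor embeds in an ultrapower of $\R$ is precisely Connes' embedding conjecture. The principal obstacle I anticipate is the ``metric-to-algebraic'' transition---verifying that the ultraproduct of the purely geometric $\bb Z_4$-equivariant sphere embeddings really does assemble into a map carrying enough linear and algebraic structure (inner-product-preserving, $L^2$-linear, sending $(N)_1$ contractively into $(M^\uu)_1$, and unitalizable) to feed into the Jordan-morphism machinery. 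Once that is in place, the remainder is essentially a verbatim reprise of the proof of Proposition \ref{Z4universal} combined with a standard Keisler--Shelah step.
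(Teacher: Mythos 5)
Your proposal is correct and follows essentially the same route as the paper: place finite pieces of $U(N)$ on a finite-dimensional complex sphere, push them into $U(M)$ via the hypothesis, assemble the resulting inner-product-preserving data with an ultraproduct, and then run Kirchberg's Jordan-morphism argument plus factoriality and the $\kop$ assumption (your Keisler--Shelah step is exactly what $\kop$ is there for). The only, harmless, difference is that you use the exact unit sphere of the span of finitely many unitaries from a dense set, whereas the paper projects onto a suitably large finite-dimensional subspace and corrects to an $\e$-almost $\bb Z_4$-embedding of a finite subset; both yield the same conclusion.
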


 \begin{proof}
Suppose that $M$ satisfies the assumption of the proposition and let $N$ be an II$_1$ factor.  Let $F$ be any finite subset of $U(N)$. Then choosing an orthogonal projection $P$ onto a suitably large finite-dimensional subspace so that $\|P(u)\|> 1- \e$ for all $u\in F \cup iF$, we can correct to an (effective in) $\e$-almost $\bb Z_4$-embedding of $F$ into some $\bb S^n$, and therefore also in $U(M)$. But $\bb Z_4$-embeddings preserve inner products, whence pairs of inner products in $F$ can be modeled arbitrarily well in $U(M)$.  As above, Kirchberg's argument shows that $N$ embeds in $M^\uu$.
 \end{proof}


We now remark how our main result recasts Kirchberg's characterization of $\R^\om$-embeddability in a game-theoretical light. Let $(A,\tr)$ be an arbitrary tracial C$^*$-algebra which we view as a normed space with respect to the $2$-norm. To introduce a bit of terminology, we say that a subspace $E\subset A$ is \emph{$\e$-almost representable} in $\R$ if there exists a subspace $F\subset \R$ and a linear bijection $T: E\to F$ so that $\nor{T}, \nor{T^{-1}}\leq 1+\e$ and $T(E\cap (A)_1)\subset_\e F\cap (\R)_1$. Then by Proposition 4.6 in \cite{kirch}, $A$ is $\R^\om$-embeddable if and only if for every $\e>0$, every finite-dimensional subspace of $A$ is $\e$-representable in $\R$.

Let us introduce the following ``one-sided, one-round game'' $\fr G_{\R}(n,\e)$ for which the winning condition is that, for all $u_1,\dotsc,u_n\in U(A)$ which are linearly independent, there exist $n$ unitaries $v_1,\dotsc,v_n\in U(\R)$ so that the map $$T: \operatorname{span}\{u_1,\dotsc,u_n\}\to \operatorname{span}\{v_1,\dotsc,v_n\}$$ defined by $T(u_i) = v_i$ satisfies $\nor{T},\nor{T^{-1}}\leq 1+\e$.

\begin{prop} There is a constant $N = N(n,\e)$ so that every $n$-dimensional subspace $E$ of any tracial $C^*$-algebra $(A,\tr)$ is $\e$-almost representable in $\R$ if $\fr G_{\R}(N,\e/4)$ is winnable.
\end{prop}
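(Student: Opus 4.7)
The plan is to apply the game hypothesis on an enlargement $E'\supseteq E$ spanned by unitaries in $U(A)$ coming from a Russo--Dye decomposition of a $\nor{\,\cdot\,}_2$-dense subset of $E\cap(A)_1$, and then restrict the resulting linear bijection to $E$. The crucial mechanism is that the game alone only controls the $\nor{\,\cdot\,}_2$-geometry, but expressing elements of $E$ as literal convex combinations of unitaries forces, by linearity alone, operator-norm bounds on the corresponding images in $\R$.

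Concretely, fix $\delta=\eta=\e/4$ and $n_0:=\lceil 2/\eta\rceil+1$. Since $E\cap(A)_1$ sits in the $\nor{\,\cdot\,}_2$-ball of radius $1$ in the $n$-dimensional space $E$, a standard volume estimate produces a $\delta$-net $\{e_1,\dotsc,e_k\}\subset E\cap(A)_1$ with $k\le(C/\delta)^{2n}$ that spans $E$. Set $\tilde e_j:=(1-\eta)e_j\in E$, so $\nor{\tilde e_j}\le 1-2/n_0$. By the Russo--Dye theorem, and with a generic choice ensuring that the total family $\{u_j^{(l)}\}$ is linearly independent in $A$, there exist unitaries $u_j^{(l)}\in U(A)$ with $\tilde e_j=\tfrac{1}{n_0}\sum_l u_j^{(l)}$. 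Put $N(n,\e):=kn_0$ and $E':=\sp\{u_j^{(l)}\}$; then $E\subseteq E'$ because the $\tilde e_j$ lie in $E'$ and span $E$. Invoking $\fr G_\R(N,\e/4)$ on the tuple $(u_j^{(l)})$ produces unitaries $v_j^{(l)}\in U(\R)$ and a linear bijection $T'\colon E'\to F':=\sp\{v_j^{(l)}\}$ sending $u_j^{(l)}\mapsto v_j^{(l)}$ with $\nor{T'},\nor{T'^{-1}}\le 1+\e/4$. Setting $T:=T'|_E\colon E\to F:=T(E)$, the restriction inherits $\nor{T},\nor{T^{-1}}\le 1+\e$.

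To verify $T(E\cap(A)_1)\subset_\e F\cap(\R)_1$, take $x\in E\cap(A)_1$ and select $e_j$ with $\nor{x-e_j}_2\le\delta$, so that $\nor{x-\tilde e_j}_2\le\delta+\eta$. The decisive point is that $T(\tilde e_j)=\tfrac{1}{n_0}\sum_l v_j^{(l)}$ is a convex combination of unitaries in $\R$, hence $T(\tilde e_j)\in F\cap(\R)_1$, and $\nor{T(x)-T(\tilde e_j)}_2\le(1+\e/4)(\delta+\eta)\le\e$, completing the required $\e$-almost representation. The main conceptual obstacle is that the game alone gives no operator-norm control on the image of the unit ball, only on its $\nor{\,\cdot\,}_2$-geometry; the Russo--Dye convex-combination trick is exactly what bridges this gap, since an average of unitaries maps under any linear map to an average of the corresponding images. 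A minor bookkeeping subtlety is that $E$ itself need not be spanned by unitaries lying in $E$, but this is finessed by passing to the ambient $E'\supseteq E$ spanned by unitaries of $A$, which is precisely what makes the slight shrinkage $(1-\eta)$ both necessary (to apply Russo--Dye in a C$^*$-algebra) and harmless (since $\{\tilde e_j\}$ still spans $E$).
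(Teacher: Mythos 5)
Your overall strategy is the same as the paper's: cover $E\cap(A)_1$ by a controlled net, use the Russo--Dye/Popa circle of results to replace net points by averages of unitaries, feed those unitaries into $\fr G_{\R}$, and exploit the fact that a linear bijection sends an average of the chosen unitaries of $A$ to an average of unitaries of $\R$, hence into $(\R)_1$. The final bookkeeping (restricting $T'$ to $E$, the $(1+\e/4)(\delta+\eta)$ estimate) is fine, and your arrangement even has the pleasant feature that the approximants $T(\tilde e_j)$ lie in $T(E)$ itself.

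The gap is the phrase ``with a generic choice ensuring that the total family $\{u_j^{(l)}\}$ is linearly independent in $A$.'' Nothing in the Kadison--Pedersen/Russo--Dye decomposition gives this for free, and no genericity argument is offered; note it is outright impossible when $\dim A<kn_0$, a case the statement (``any tracial C$^*$-algebra'') does not exclude. More seriously, your design blocks the obvious repairs: you need the identities $\tilde e_j=\frac{1}{n_0}\sum_l u_j^{(l)}$ to hold \emph{exactly}, both so that $E\subseteq E'$ (otherwise $T$ is not even defined on $E$) and so that $T(\tilde e_j)$ is an exact average of unitaries of $\R$. Hence you cannot perturb the $u_j^{(l)}$ to force independence, and you cannot drop to a maximal linearly independent subfamily either: the game then only makes the images of the surviving unitaries unitary, while $T(u_j^{(l)})$ for a discarded $u_j^{(l)}$ is an uncontrolled linear combination, and the convexity mechanism collapses. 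The paper avoids exactly this trap by separating the two roles: exact containment $E\subseteq\sp\{u_1,\dots,u_{4n}\}$ comes from writing a basis of $E$ as linear (not convex) combinations of at most four unitaries each, while the net points only need to be \emph{approximately} convex combinations of the additional unitaries; since only approximation is needed there, unitaries can be perturbed to be linearly independent (the paper's second claim, using a projection of trace $1-\e^2/2$ in a diffuse subalgebra of $A''$, which is where infinite-dimensionality is invoked). To repair your argument you would either need a genuine ``freedom'' lemma showing that exact $n_0$-term unitary decompositions can be chosen avoiding any prescribed finite-dimensional subspace, or simply adopt the paper's split between exact spanning and approximate convex decomposition. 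A minor further point: a $\delta$-net of $E\cap(A)_1$ in $\|\cdot\|_2$ need not span $E$ (the operator-norm ball of $E$ can be $2$-norm thin in some directions), so you should adjoin a scaled basis of $E$ to the net; this is harmless for the count $N(n,\e)$.
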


\begin{proof} We first claim that there is a uniform constant $K(n,\e)$ so that for every $n$-dimensional subspace $E\subset A$ of any tracial $C^*$-algebra $(A,\tr)$ there exists a set of unitaries ${\bar u} =\{u_1,\dotsc,u_l\}\subset U(A)$ with $l\leq K$ so that every element of $E\cap (A)_1$ is $\e$-approximated in $2$-norm by a convex combination of elements of $\bar u$.

Indeed, choose an $\e/2$-net $x_1, \dotsc, x_m\in E\cap (A)_1$. The cardinality of such a net is bounded in particular by the $\e/4$-covering number of the unit ball in $\ell^2_n$. We may perturb each $x_i$ so that $\nor{x_i}<1 - \e/4$ and still have an $\e$-net for $E\cap (A)_1$. By the main result of \cite{popa-rd} there is a constant $C$ depending only on $\e$ so that each $x_i$ is a convex combination of at most $C$ unitaries in $U(A)$, whence the claim follows.

We next claim that if $A$ is infinite-dimensional and if $E\subset A$ is a finite-dimensional subspace, then for every $\e>0$ and $u\in U(A)$ there exists $u'\in U(A)$ with $\nor{u - u'}_2<\e$ and so that $u'$ is linearly independent from $E$.  To see this, let $P_E: L^2(A)\to E$ be the orthogonal projection onto $E$. By the Kaplansky density theorem, we have that $U(A)$ is $2$-norm dense in $U(A'')$. Since $M := A''\subset \cal B(L^2(A,\tr))$ is infinite-dimensional, it contains a diffuse abelian subalgebra. Therefore, there is a projection $p\in M$ with trace $\tr(p) = 1-\e^2/2$ and a sequence of unitaries $v_n\in U(M)$ so that $v_n\to p$ weakly. Since $P_E$ is a finite-rank operator, we thus have that $P_E(uv_n)\to P_E(up)$ strongly, whence $\nor{P_E(uv_n)}_2 \to \nor{P_E(up)}_2\leq \nor{p}_2 = \sqrt{1 - \e^2/2}$. It is now easy to see that choosing $n$ sufficiently large and $u'\in U(A)$ sufficiently close to $uv_n$ works.

We now can proceed with the proof of the proposition. Let $E = \operatorname{span}\{u_1,\dotsc,u_n\}\subset A$. (Every $n$-dimensional subspace of a $C^*$-algebra is a subspace of a space spanned by at most $4n$ unitaries, so we may assume this is the case without loss of generality.) By the previous claims, we can extend $u_1, \dotsc, u_n$ to $u_1,\ldots,u_n, u_{n+1}, \dotsc, u_s$ ($s\leq n + K(n,\e)$) to a complete collection of linearly independent unitaries so that all elements in $E\cap (A)_1$ are $2\e$-approximated in $2$-norm by a convex combination of unitaries in the collection. If $\fr G_R(s,\e/4)$ is winnable, then it is easy to check that for $S = T|_E$ we have that $S(E\cap (A)_1) \subset_{\e} S(E)\cap (\R)_1$, and we are done.
\end{proof}

\begin{prob} Let $\cal C\subset \ell^2_n$ be a convex subset of the unit ball in $n$-dimensional Hilbert space. For every $\e>0$ does there exists a $\rm II_1$-factor $M$ so that $(\ell^2_n,\cal C)$ is $\e$-represented in $M$? Can one always choose a locally universal $\rm II_1$-factor (in the sense of \cite{mtoa3}) or even $\R$?
\end{prob}

\bibliographystyle{amsplain}

\end{document}